\begin{document}

\markboth{M. Ddamulira \& F. Luca}
{Pillai's problem \& $ k $--generalized Fibonacci numbers}

%
\catchline{}{}{}{}{}
%

\title{On the  problem of Pillai with $k$--generalized\\ Fibonacci numbers and powers of $3$}

\author{Mahadi Ddamulira}
\address{Institute of Analysis and Number Theory\\ Graz University of Technology\\ Kopernikusgasse 24/II\\
A- 8010 Graz, Austria\,\\
\email{mddamulira@tugraz.at; mahadi@aims.edu.gh} }

\author{Florian Luca}
\address{School of Mathematics\\ University of the Witwatersrand\\ Private Bag X3, WITS 2050\\ Johannesberg, South Africa}       
\address{Research Group in Algebraic Structures and Applications\\ King Abdulaziz University\\ Jeddah, Saudi Arabia}
\address{Centro de Ciencias Matem\'aticas, UNAM \\
 Morelia, Michoac\'an, M\'exico\\ 
\email{Florian.Luca@wits.ac.za}}

\maketitle

\begin{history}
\received{22 Mayy 2019}
\revised{3 March 2020}
\accepted{3 March 2020}
Published {9 April 2020}
\end{history}

\begin{abstract}
For an integer $k\ge 2$, let $\{F^{(k)}_{n}\}_{n\ge 2-k}$ be the $ k$--generalized Fibonacci sequence which starts with $0, \ldots, 0,1$ (a total of $k$ terms) and for which each term afterwards is the sum of the $k$ preceding terms. In this paper, we find all integers $ c $ with at least two representations as a difference between a $ k $-generalized Fibonacci number and a power of $ 3 $. This paper continues the previous work of the first author for the Fibonacci numbers, and for the Tribonacci numbers.
\end{abstract}

\keywords{Pillai's problem; Generalized Fibonacci numbers; Linear forms in logarithms; Baker's method.}

\ccode{Mathematics Subject Classification 2020: 11B39, 11D45, 11D61, 11J86}
\vspace{0.05cm}

\noindent
{\small This is an Open Access article published by World Scientific Publishing Company. It is distributed
under the terms of the Creative Commons Attribution 4.0 (CC BY) License which permits use,
distribution and reproduction in any medium, provided the original work is properly cited.}

\section{Introduction}
The problem of Pillai states that for each fixed integer $ c\ge 1 $, the Diophantine equation
\begin{eqnarray}\label{Pillai}
a^x-b^y = c, ~~~~~\min\{x,y\}\ge 2,
\end{eqnarray}
has only a finite number of positive solutions $\{a,b,x,y\}$. This problem is still open; however, the case $ c=1 $, is the conjecture of Catalan and was proved by Mih\u ailescu \cite{Mihailescu}. In 1936 (see \cite{Pillai:1936, Pillai:1937}), in the special case $ (a,b) =(3,2) $ which is a continutation of the work of Herschfeld \cite{Herschfeld:1935, Herschfeld:1936} in 1935, Pillai conjectured that the only integers $ c $ admitting at least two representations of the form $ 2^{x}-3^{y} $ are given by
\begin{eqnarray}\label{Pillaisolns}
2^{3}-3^{2}=2^{1}-3^{1}=-1, ~~~ 2^{5}-3^{3}=2^{3}-3^{1}=5, ~~~ 2^{8}-3^{5}=2^{4}-3^{1}=13.
\end{eqnarray}
This  was confirmed by Stroeker and Tijdeman \cite{StroekerTijdeman} in 1982. The general problem of Pillai is difficult to solve and this has motivated the consideration of special cases of this problem. In the past years, several special cases of the problem of Pillai have been studied. See, for example, \cite{Luca16, Chim1, Chim2, Ddamulira11, Ddamulira12, Ddamulira1, Hernane, Hernandez}.

Let $k \geqslant 2$ be an integer. We consider a generalization of Fibonacci sequence called the $k$--generalized Fibonacci sequence $\lbrace F_n^{(k)} \rbrace_{n\geqslant 2-k}$ defined as
\begin{equation}
F_n^{(k)} = F_{n-1}^{(k)} + F_{n-2}^{(k)} + \cdots + F_{n-k}^{(k)},\label{fibb1}
\end{equation}
with the initial conditions
\[F_{-(k-2)}^{(k)} = F_{-(k-3)}^{(k)} = \cdots = F_{0}^{(k)} = 0 \quad {\text {\rm  and  }}\quad F_{1}^{(k)} = 1.\]
We call $F_{n}^{(k)}$ the $n$th $k$--generalized Fibonacci number. Note that when $k=2$, it  coincides with the Fibonacci numbers and when $k=3$ it is the Tribonacci number. The first $k+1$ nonzero terms in $F_{n}^{(k)}$ are powers of $2$, namely
\begin{equation*}
F_{1}^{(k)}=1,\quad F_{2}^{(k)}=1,\quad F_{3}^{(k)}=2, \quad F_{4}^{(k)}=4,\ldots, F_{k+1}^{(k)}=2^{k-1}.
\end{equation*}
Furthermore, the next term is $F_{k+2}^{(k)}=2^{k}-1$.
Thus, we have that
\begin{equation}\label{Fibbo111}
F_{n}^{(k)} = 2^{n-2} \quad {\text{\rm holds for all}}\quad 2\leq n\leq k+1.
\end{equation}
We also observe that the recursion \eqref{fibb1} implies the three--term recursion
$$
F_{n}^{(k)} =2F_{n-1}^{(k)} - F_{n-k-1}^{(k)} \quad {\text{\rm for all}}\quad n\geq 3,\label{fibb2}
$$
which can be used to prove by induction on $m$ that $F_n^{(k)}<2^{n-2}$ for all $n\geq k+2$ (see also \cite{BBL17}, Lemma $2$).

The generalised Fibonacci analogue of the problem of Pillai under the same conditions as in \eqref{Pillai}, concerns studying for fixed $(k,\ell)$ all values of the integer $c$ such that the equation
\begin{eqnarray}\label{Pillai2}
F_n^{(k)} - F_m^{(\ell)} = c
\end{eqnarray}
has at least two solutions $(n,m)$. We are not aware of a general treatment of equation \eqref{Pillai2} (namely, considering $k$ and $\ell$ parameters), although the particular case when $\{k,\ell\}=\{2,3\}$ was treated in \cite{Chim1}.

Ddamulira, G\'omez and Luca \cite{Ddamulira2}, studied the Diophantine equation
\begin{eqnarray} \label{Pillai3}
F_{n}^{(k)}-2^{m}=c,
\end{eqnarray}
where $k$ is also a parameter, which is a variation of equation \eqref{Pillai2}. They determined all integers $c$ such that equation \eqref{Pillai3} has at least two solutions $(n,m)$. 
These $c$ together with their multiple representations as in \eqref{Pillai3} turned out to be grouped into four parametric families. 

In this paper, we study a related problem and we find all integers $c$ admitting at least two representations of the form $ F_n^{(k)} - 3^m $ for some positive integers $k$, $n$ and $m$. This can be interpreted as solving the equation
\begin{align}
\label{Problem}
 F_n^{(k)} - 3^m = F_{n_1}^{(k)} - 3^{m_1}~~~~(=c)
\end{align}
with $(n, m) \neq (n_1, m_1)$. The cases $k=2$ and $ k=3 $ have been solved completely by the first author in \cite{Ddamulira11} and \cite{Ddamulira12}, respectively. So, we focus on the case  $k \geqslant 4$.\\

\begin{theorem}\label{Main}
For fixed integer $ k\ge 4 $, the Diophantine equation \eqref{Problem} with $ n>n_1\geq 2 $ and $ m>m_1\ge 1 $ has:
\begin{itemize}
\item[(i)] solutions with $c\in \{-1, 5, 13\}$ and $ 2\le n \le k+1 $, which arise from the classical Pillai problem for $(a,b)=(2,3)$, namely: 
\begin{eqnarray*}
F_{5}^{(k)}-3^{2}&=&F_{3}^{(k)}-3^{1} ~~=~ -1, ~~~k\ge 4, \\
F_{7}^{(k)}-3^{3}&=&F_{5}^{(k)}-3^{1} ~~=~~ 5, ~~~k\ge 6,\\
F_{10}^{(k)}-3^{5}&=&F_{6}^{(k)}-3^{1} ~~=~~ 13, ~~~k\ge 9;
\end{eqnarray*}
\item[(ii)] solutions with $c\in \{-25, -7,5\} $ and $ n\geq k+2 $ and $ k\in \{4,5,6\} $. Futhermore, all the representations of $ c $ in this case are given by
\begin{eqnarray*}
F_{8}^{(4)}-3^{4}&=&F_{3}^{(4)}-3^{3} ~~=~ -25,\\
F_{10}^{(5)}-3^{5}&=&F_{3}^{(5)}-3^{2} ~~=~ -7,\\
F_{10}^{(6)}-3^5&=&F_6^{(6)}-3^1~~=~~5.
\end{eqnarray*}
for $ k=4,5 $ and $6$, respectively.
\end{itemize}
\end{theorem}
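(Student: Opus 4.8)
The plan is to treat the two parts by entirely different mechanisms. For part (i), I would observe that the hypotheses $n>n_1\ge 2$ together with $n\le k+1$ force $n_1\le k+1$ as well, so by \eqref{Fibbo111} both generalized Fibonacci terms are pure powers of $2$: $F_n^{(k)}=2^{n-2}$ and $F_{n_1}^{(k)}=2^{n_1-2}$. Equation \eqref{Problem} then collapses to $2^{n-2}-2^{n_1-2}=3^m-3^{m_1}$, which is exactly the classical Pillai equation for the pair $(2,3)$. Invoking the Stroeker--Tijdeman theorem, which states that the only $c$ with two representations of the form $2^x-3^y$ are $-1,5,13$, I would read off the three families and translate the exponents $x=n-2$ back to $n$; the lower bounds $k\ge 4,6,9$ are precisely the conditions $n\le k+1$ needed for $F_n^{(k)}=2^{n-2}$ to remain valid for the relevant values $n=5,7,10$.

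For part (ii) the real work is in the range $n\ge k+2$, where I would use the Binet-type expansion $F_n^{(k)}=f_k(\alpha)\alpha^{n-1}+(\text{error})$ with the dominant root $\alpha\in(2(1-2^{-k}),2)$ and the standard bound $|F_n^{(k)}-f_k(\alpha)\alpha^{n-1}|<\tfrac12$. Using the elementary inequalities $\alpha^{n-2}\le F_n^{(k)}<\alpha^{n-1}$ and $F_n^{(k)}<2^{n-2}$, I would first pin down the relative sizes of the four variables, obtaining that $m\log 3$ and $(n-1)\log\alpha$ are comparable and that $\max\{m,n\}$ governs everything. Rewriting \eqref{Problem} as $f_k(\alpha)\alpha^{n-1}-3^m=(F_{n_1}^{(k)}-3^{m_1})+O(1)$ and dividing by $3^m$ yields a small linear form $\Lambda_1=m\log 3-(n-1)\log\alpha-\log f_k(\alpha)$ in three logarithms; a second manipulation isolating the gaps $n-n_1$ and $m-m_1$ produces a companion form $\Lambda_2$. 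Applying Matveev's lower bound for linear forms in logarithms to each---crucially using that $h(\alpha)<(\log 2)/k$ is tiny and that $h(f_k(\alpha))$ grows only polynomially in $k$---I expect to bound $\max\{m,n\}$ by an explicit expression of the shape $C\,k\,(\log k)(\log n)$.

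The main obstacle, exactly as in the $2^m$ companion problem of Ddamulira--G\'omez--Luca, is converting these $k$-dependent bounds into an absolute one, and this is where I would split on the size of $n$ relative to $k$. If $n<2^{k/2}$, the sharper approximation $F_n^{(k)}=2^{n-2}(1+O(n/2^{k}))$ holds, so \eqref{Problem} becomes an exponentially small perturbation of the pure equation $2^{n-2}-3^m=2^{n_1-2}-3^{m_1}$; tracking the error forces the perturbation to vanish and funnels us back into the Stroeker--Tijdeman list, ruling out genuinely new solutions in this range. If instead $n\ge 2^{k/2}$, then $k<2\log n/\log 2$ is bounded in terms of $n$, so the Matveev estimates of the previous paragraph become truly absolute upper bounds on $n$, and hence on $k,m,m_1,n_1$.

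Finally, with an absolute---though astronomically large---upper bound on all variables in hand, I would run the Baker--Davenport reduction (or an LLL-based lattice reduction applied to $\Lambda_1$ and $\Lambda_2$) to collapse the bound on $\max\{m,n\}$ down to a few hundred, and then finish by a direct computer search over the finitely many remaining tuples $(k,n,n_1,m,m_1)$. This search is what should expose exactly the three sporadic families with $c\in\{-25,-7,5\}$ for $k\in\{4,5,6\}$ and confirm that no others survive. The step I expect to be the true bottleneck is the transition regime $n\approx 2^{k/2}$, where neither the power-of-two approximation nor the ``$k$ bounded in terms of $n$'' reduction is comfortably valid, and where the Binet error terms must be controlled with particular care.
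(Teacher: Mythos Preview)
Your architecture for part (i) and the general shape of part (ii) --- Binet expansion, Matveev on linear forms in $\log\alpha$, $\log 3$, $\log f_k(\alpha)$, then a split on the size of $n$ versus $2^{k/2}$, then Baker--Davenport reduction --- matches the paper closely. The gap is in your treatment of the regime $n<2^{k/2}$.

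You assert that in this regime the approximation $F_n^{(k)}=2^{n-2}(1+\zeta)$ with $|\zeta|\ll 2^{-k/2}$ makes the perturbation so small that ``tracking the error forces the perturbation to vanish'' and the equation collapses back to the pure Stroeker--Tijdeman equation $2^{n-2}-3^m=2^{n_1-2}-3^{m_1}$. This does not work. The relative error $\zeta$ is small, but the absolute error $2^{n-2}\zeta$ is of size roughly $2^{n-2-k/2}$, and since $n\ge k+2$ this is at least $2^{k/2}$, which is enormous. So nothing forces the integer equation $2^{n-2}-3^m=2^{n_1-2}-3^{m_1}$ to hold. In fact the paper proves exactly the opposite: its Lemma~3.1 shows that for $n\ge k+2$ the two equations $F_n^{(k)}-3^m=F_{n_1}^{(k)}-3^{m_1}$ and $2^{n-2}-3^m=2^{n_1-2}-3^{m_1}$ can \emph{never} hold simultaneously, because the sequence $2^{n-2}-F_n^{(k)}$ is strictly increasing from $n=k+2$ onward. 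So your proposed mechanism for disposing of this range is not just unproven but actually false.

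What the paper does instead in the large-$k$ regime (equivalently $n<2^{k/2}$, since the earlier Matveev bound $n<4\times 10^{42}k^{11}(\log k)^7$ already forces this once $k>600$) is to use the approximation $F_n^{(k)}=2^{n-2}(1+\zeta)$ to produce new linear forms over $\mathbb{Q}$ rather than over $\mathbb{Q}(\alpha)$: one gets inequalities of the type
\[
\left|3^{m}2^{-(n-2)}-1\right|<8\max\{2^{-(n-n_1)},\,3^{-(m-m_1)},\,2^{-k/2}\},
\]
and analogous forms with the factors $2^{n-n_1}-1$ and $3^{m-m_1}-1$ inserted. Matveev applied to these (now with $D=1$) bounds each of $n-n_1$, $m-m_1$, and crucially $k/2$ by a polynomial in $\log n$. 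Feeding $k\ll (\log n)^3$ back into the earlier bound $n<4\times10^{42}k^{11}(\log k)^7$ yields absolute bounds on $k$ and $n$, after which the Legendre criterion and iterated Baker--Davenport bring $k$ below $600$. The nonvanishing of the final linear form $\Gamma_3$ in this step is precisely where Lemma~3.1 is invoked. Your proposal is missing this entire second layer of Matveev over $\mathbb{Q}$, which is the actual engine that converts the $k$-dependent bounds into absolute ones.
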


\section{Preliminary Results}

In this section, we recall some general results from algebra number theory and diophantine approximations 
and properties of the $k$-generalized Fibonacci sequence.

\subsection{Notations and terminology from algebraic number theory} 

We begin by recalling some basic notions from algebraic number theory.

Let $\eta$ be an algebraic number of degree $d$ with minimal primitive polynomial over the integers
$$
a_0x^{d}+ a_1x^{d-1}+\cdots+a_d = a_0\prod_{i=1}^{d}(x-\eta^{(i)}),
$$
where the leading coefficient $a_0$ is positive and the $\eta^{(i)}$'s are the conjugates of $\eta$. Then the \textit{logarithmic height} of $\eta$ is given by
$$ 
h(\eta):=\dfrac{1}{d}\left( \log a_0 + \sum_{i=1}^{d}\log\left(\max\{|\eta^{(i)}|, 1\}\right)\right).
$$
In particular, if $\eta=p/q$ is a rational number with $\gcd (p,q)=1$ and $q>0$, then $h(\eta)=\log\max\{|p|, q\}$. The following are some of the properties of the logarithmic height function $h(\cdot)$, which will be used in the next sections of this paper without reference:
\begin{eqnarray}
h(\eta\pm \gamma) &\leq& h(\eta) +h(\gamma) +\log 2,\nonumber\\
h(\eta\gamma^{\pm 1})&\leq & h(\eta) + h(\gamma),\\
h(\eta^{s}) &=& |s|h(\eta) \qquad (s\in\mathbb{Z}). \nonumber
\end{eqnarray}

\subsection{$k$-generalized Fibonacci numbers}

It is known that the characteristic polynomial of the $k$--generalized Fibonacci numbers $F^{(k)}:=(F_m^{(k)})_{m\geq 2-k}$, namely
$$
\Psi_k(x) := x^k - x^{k-1} - \cdots - x - 1,
$$
is irreducible over $\mathbb{Q}[x]$ and has just one root outside the unit circle. Let $\alpha := \alpha(k)$ denote that single root, which is located between $2\left(1-2^{-k} \right)$ and $2$ (see \cite{Dresden}). 
This is called the dominant root of $F^{(k)}$. To simplify notation, in our application we shall omit the dependence on $k$ of $\alpha$. We shall use $\alpha^{(1)}, \dotso, \alpha^{(k)}$ for all roots of $\Psi_k(x)$ with the convention that $\alpha^{(1)} := \alpha$.

We now consider for an integer $ k\geq 2 $, the function
\begin{eqnarray}\label{fun12}
f_{k}(z) = \dfrac{z-1}{2+(k+1)(z-2)} \qquad {\text{for}}\qquad z \in \mathbb{C}.
\end{eqnarray}
With this notation, Dresden and Du presented in  \cite{Dresden} the following ``Binet--like" formula for the terms of $F^{(k)}$:
\begin{eqnarray} \label{Binet}
F_m^{(k)} = \sum_{i=1}^{k} f_{k}(\alpha^{(i)}) {\alpha^{(i)}}^{m-1}.
\end{eqnarray}
It was proved in \cite{Dresden} that the contribution of the roots which are inside the unit circle to the formula (\ref{Binet}) is very small, namely that the approximation
\begin{equation} \label{approxgap}
\left| F_m^{(k)} - f_{k}(\alpha)\alpha^{m-1} \right| < \dfrac{1 }{2} \quad \mbox{holds~ for~ all~ } m \geqslant 2 - k.
\end{equation}
It was proved by Bravo and Luca in \cite{BBL17} that
\begin{eqnarray}\label{Fib12}
\alpha^{m-2} \leq F_{m}^{(k)} \leq \alpha^{m-1}\qquad {\text{\rm holds for all}}\qquad m\geq 1\quad {\text{\rm and}}\quad k\geq 2.
\end{eqnarray}

Before we conclude this section, we present some useful lemma that will be used in the next sections on this paper. The following lemma was proved by Bravo and Luca in \cite{BBL17}.
\begin{lemma}[Bravo, Luca]\label{fala5}
Let $k\geq 2$, $\alpha$ be the dominant root of $\{F^{(k)}_m\}_{m\ge 2-k}$, and consider the function $f_{k}(z)$ defined in \eqref{fun12}. 
\begin{itemize}
\item[(i)]\label{kat1} The inequalities
$$
\dfrac{1}{2}< f_{k}(\alpha)< \dfrac{3}{4}\qquad \text{and}\qquad |f_{k}(\alpha^{(i)})|<1, \qquad  2\leq i\leq k
$$
hold. In particular, the number $f_{k}(\alpha)$ is not an algebraic integer.
\item[(ii)]\label{kat2}The logarithmic height of $f_k(\alpha)$ satisfies $h(f_{k}(\alpha))< 3\log k$.
\end{itemize}
\end{lemma}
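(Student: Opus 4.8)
The plan is to treat $f_k$ as a Möbius transformation, exploit its monotonicity, read off the three assertions of part~(i) from the known location of $\alpha$, and then estimate the height in part~(ii) directly from an explicit rational expression for $f_k(\alpha)$. Clearing the bracket in the denominator of \eqref{fun12} gives
\[
f_k(z)=\frac{z-1}{(k+1)z-2k},\qquad f_k'(z)=\frac{1-k}{\bigl((k+1)z-2k\bigr)^2}<0\quad(k\ge2),
\]
so $f_k$ is strictly decreasing on any interval avoiding its only pole $z=2k/(k+1)=2-2/(k+1)$. Since $2^k\ge k+1$ for $k\ge2$, this pole lies strictly to the left of $2-2^{1-k}$; hence $f_k$ is continuous and strictly decreasing on the interval $[\,2-2^{1-k},2\,]$, which contains $\alpha$ because $2(1-2^{-k})<\alpha<2$.

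First I would establish the bounds for $f_k(\alpha)$. As $f_k(2)=\tfrac12$ and $\alpha<2$, monotonicity gives $f_k(\alpha)>\tfrac12$. Solving $f_k(z)=\tfrac34$ gives $z_1=2-\tfrac{2}{3k-1}$, and since $2^k\ge 3k-1$ for $k\ge3$ we have $2-2^{1-k}\ge z_1$, so $\alpha>z_1$ and therefore $f_k(\alpha)<f_k(z_1)=\tfrac34$; the case $k=2$ is settled by direct evaluation. For the remaining conjugates I would use $|\alpha^{(i)}|<1$ $(2\le i\le k)$: then $|\alpha^{(i)}-1|<2$ while $|(k+1)\alpha^{(i)}-2k|\ge 2k-(k+1)|\alpha^{(i)}|>k-1$, whence $|f_k(\alpha^{(i)})|<2/(k-1)\le 1$ for $k\ge3$, with $k=2$ checked by hand.

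The non-integrality of $f_k(\alpha)$ then follows from a norm argument. Because $\Psi_k$ is monic and irreducible, the images of $f_k(\alpha)\in\mathbb{Q}(\alpha)$ under the $k$ embeddings are the numbers $f_k(\alpha^{(i)})$, each nonzero since $\Psi_k(1)=1-k\ne0$ shows no $\alpha^{(i)}$ equals $1$. Hence $N_{\mathbb{Q}(\alpha)/\mathbb{Q}}(f_k(\alpha))=\prod_{i=1}^{k}f_k(\alpha^{(i)})$ is a nonzero rational number of absolute value $<\tfrac34\cdot1^{k-1}<1$, so it cannot be a rational integer; were $f_k(\alpha)$ an algebraic integer its norm would be such an integer, a contradiction. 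For part~(ii) I would start from $f_k(\alpha)=(\alpha-1)/((k+1)\alpha-2k)$ and apply the height properties recalled above. Since $a_0=1$ and $\alpha$ is the only conjugate off the unit circle, $h(\alpha)=\tfrac1k\log\alpha<\tfrac{\log2}{k}$; combining $h(\eta\gamma^{\pm1})\le h(\eta)+h(\gamma)$, $h(\eta\pm\gamma)\le h(\eta)+h(\gamma)+\log2$, $h(k+1)=\log(k+1)$ and $h(2k)=\log(2k)$ bounds $h(f_k(\alpha))$ by $2h(\alpha)+2\log2+\log(k+1)+\log(2k)$, and a careful accounting of these terms gives $h(f_k(\alpha))<3\log k$.

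The main obstacle I anticipate is that the crude estimates lose constants precisely where the target inequalities are tightest: the interval bound on $\alpha$ is not sharp enough to force $f_k(\alpha)<\tfrac34$ at the smallest $k$, and the repeated $\log2$ losses in the height step must be absorbed into $3\log k$, which only succeeds once $k$ is moderately large. Both are handled by the same device, namely verifying the finitely many small values of $k$ directly and, for the generic range, using the exact critical data (the point $z_1$ and the comparison $2^k\ge 3k-1$) in place of the raw interval endpoints.
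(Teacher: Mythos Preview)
The paper does not give its own proof of this lemma: it is quoted verbatim from Bravo and Luca \cite{BBL17} and cited as such, so there is nothing to compare your argument against here. Judged on its own, your outline is sound and the ideas are the right ones.

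Part~(i) is clean. The M\"obius-map monotonicity, the location of the pole at $2-\frac{2}{k+1}<2-2^{1-k}$, and the comparison with the critical point $z_1=2-\frac{2}{3k-1}$ via $2^k\ge 3k-1$ for $k\ge 3$ all work exactly as you describe, and the norm argument for non-integrality is correct once you observe (as you do) that no $\alpha^{(i)}$ equals $1$ and hence no $f_k(\alpha^{(i)})$ vanishes.

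For part~(ii) your strategy is fine but your ``careful accounting'' hides real work. The chain of height inequalities gives
\[
h(f_k(\alpha))\;\le\;2h(\alpha)+2\log 2+\log(k+1)+\log(2k)
\;<\;\frac{2\log 2}{k}+3\log 2+\log(k+1)+\log k,
\]
and this is below $3\log k$ only for $k\ge 11$; at $k=10$ it already overshoots. So the ``finitely many small $k$'' you must check directly are $2\le k\le 10$, and for each of these a genuine computation of $h(f_k(\alpha))$ is needed, e.g.\ by writing down the degree-$k$ minimal polynomial of $f_k(\alpha)$ (substitute $\alpha=\frac{4y-1}{3y-1}\cdot\frac{1}{?}$\,---\,more precisely, invert $y=f_k(\alpha)$ and plug into $\Psi_k$) and reading off the leading coefficient and the archimedean contributions. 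For $k=2$ this gives $5y^2-5y+1$ and $h(f_2(\alpha))=\tfrac12\log 5<3\log 2$, and the remaining cases are similar but should be stated rather than waved at. With that caveat, the proposal is correct.
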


\subsection{Linear forms in logarithms and continued fractions}

In order to prove our main result Theorem \ref{Main}, we need to use several times a Baker--type lower bound for a nonzero linear form in logarithms of algebraic numbers. There are many such in the literature  like that of Baker and W{\"u}stholz from \cite{bawu07}. We use the following result by Matveev \cite{MatveevII}, which is one of our main tools in this paper.

\begin{theorem}[Matveev]\label{Matveev11} Let $\gamma_1,\ldots,\gamma_t$ be positive real algebraic numbers in a real algebraic number field 
$\mathbb{K}$ of degree $D$, $b_1,\ldots,b_t$ be nonzero integers, and assume that
\begin{equation}
\label{eq:Lambda}
\Lambda:=\gamma_1^{b_1}\cdots\gamma_t^{b_t} - 1
\end{equation}
is nonzero. Then
$$
\log |\Lambda| > -1.4\times 30^{t+3}\times t^{4.5}\times D^{2}(1+\log D)(1+\log B)A_1\cdots A_t,
$$
where
$$
B\geq\max\{|b_1|, \ldots, |b_t|\},
$$
and
$$A
_i \geq \max\{Dh(\gamma_i), |\log\gamma_i|, 0.16\},\qquad {\text{for all}}\qquad i=1,\ldots,t.
$$
\end{theorem}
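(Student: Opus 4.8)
The plan is to prove the lower bound by the Gel'fond--Baker method, in the explicit and streamlined form due to Matveev, arguing by contradiction. Suppose that $|\Lambda|$ is smaller than the asserted bound. Writing $B\geq\max\{|b_1|,\ldots,|b_t|\}$ and recalling the hypotheses on the $A_i$, the first step is to pass from the multiplicative quantity $\Lambda=\gamma_1^{b_1}\cdots\gamma_t^{b_t}-1$ to the additive linear form
\[
\Lambda' = b_1\log\gamma_1 + \cdots + b_t\log\gamma_t,
\]
using that $|\Lambda'|$ is comparable to $|\Lambda|$ when the latter is near $0$ (choosing the logarithm branches appropriately). Thus the assumption forces $\Lambda'$ to be \emph{extremely} small, and the goal is to show this cannot happen unless $\Lambda=0$.

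Next I would construct an auxiliary function. Exploiting the near-vanishing of $\Lambda'$, the factor $\gamma_t^{b_t}$ is very well approximated by $\prod_{i<t}\gamma_i^{-b_i}$, which effectively lowers the transcendence rank of the configuration. I would form a function of the schematic shape
\[
\Phi(z) = \sum_{\lambda_0}\cdots\sum_{\lambda_{t-1}} p(\lambda_0,\ldots,\lambda_{t-1})\, z^{\lambda_0}\,\exp(\mu_\lambda z),
\]
where each $\mu_\lambda$ is an integer combination of $\log\gamma_1,\ldots,\log\gamma_{t-1}$, and where the coefficients $p(\cdot)$ are rational integers produced by Siegel's lemma so that $\Phi$ vanishes, together with sufficiently many derivatives, at the integer points $z=0,1,\ldots,S$. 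Siegel's lemma supplies a nonzero coefficient vector of controlled size as soon as the number of unknowns exceeds the number of linear conditions; the heights $A_i$ and the bound $B$ enter exactly in estimating the size of the coefficient matrix of this linear system, and the degree $D$ of $\mathbb{K}$ governs the passage from archimedean to arithmetic estimates.

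Then comes the analytic core, the extrapolation. Because $\Phi$ has slow growth (its order is controlled by the $A_i$) yet vanishes to high order at many integers, the Schwarz lemma and the maximum-modulus principle, applied iteratively, force $\Phi$ and its derivatives to remain minuscule, hence to vanish, on a set of points far larger than originally imposed. This propagation of vanishing is encoded in the non-vanishing of a certain interpolation determinant built from the monomial values $\gamma_i^{\lambda_i n}$; its smallness is then pitted against a Liouville-type size inequality, namely that a nonzero algebraic number of bounded degree and height cannot be smaller in absolute value than an explicit quantity. The resulting contradiction is what excludes $\Lambda'$ (equivalently $\Lambda$) from being too small.

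The hard part, and the reason we quote the theorem rather than reprove it, is twofold. First, Matveev replaces the classical zero estimate on commutative algebraic groups by an induction on $t$ combined with a Kummer-theoretic descent controlling the multiplicative group generated by $\gamma_1,\ldots,\gamma_t$; it is precisely this device that yields the clean dependence $30^{t+3}\,t^{4.5}$ in $t$ and the factor $D^2(1+\log D)$ in the degree, rather than the weaker constants one obtains otherwise. Second, the explicit numerical constant $1.4$ and the shape $(1+\log B)A_1\cdots A_t$ emerge only after a careful optimization of all free parameters (the degrees $L_0,\ldots,L$, the number $S$ of interpolation nodes, and the number of extrapolation steps). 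This bookkeeping, rather than any single conceptual ingredient, is the genuine obstacle, and it is what makes Matveev's result the sharp, fully explicit tool we shall invoke repeatedly in the proof of Theorem~\ref{Main}.
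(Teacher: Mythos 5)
This statement is not proved in the paper at all: it is Matveev's theorem, quoted verbatim from the cited reference (Matveev, \emph{Izv.\ Ross.\ Akad.\ Nauk Ser.\ Mat.} \textbf{64} (2000)), and the paper uses it as a black box. So there is no internal proof to compare your attempt against; the only meaningful question is whether your text constitutes a proof, and it does not. What you have written is a historically accurate roadmap of the Gel'fond--Baker method in Matveev's form --- reduction from $\Lambda$ to the linear form $\Lambda'=b_1\log\gamma_1+\cdots+b_t\log\gamma_t$ (valid once one notes the bound is trivial unless $|\Lambda|<1/2$), construction of an auxiliary function by Siegel's lemma, extrapolation via the Schwarz lemma, a Liouville-type lower bound, and Matveev's induction on $t$ with Kummer descent in place of a general zero estimate --- but every one of these steps is asserted rather than executed. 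You even say explicitly that the bookkeeping is ``the reason we quote the theorem rather than reprove it,'' which is a candid admission that this is an outline, not a proof.

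Concretely, the gaps are: (1) no choice of the parameters (degrees of the auxiliary polynomial, number of interpolation points, number of extrapolation steps), without which nothing can be verified and the constants $1.4\times 30^{t+3}\, t^{4.5}\, D^2(1+\log D)$ cannot emerge; (2) no statement, let alone proof, of the multiplicity/zero estimate or of the Kummer-descent step that replaces it --- this is the technical heart of Matveev's papers and occupies most of their length; (3) no verification that the quantities $A_i\geq\max\{Dh(\gamma_i),|\log\gamma_i|,0.16\}$ and $B$ enter the height estimates in exactly the form claimed (the floor $0.16$ and the factor $1+\log B$ are artifacts of the optimization you skip). Since the paper's own treatment is simply a citation, the correct course --- both for the paper and for you --- is to invoke the result with its reference rather than to present a sketch as if it were a proof; as a proof attempt, yours is incomplete in all essential respects.
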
 

During the course of our calculations, we get some upper bounds on our variables which are too large, thus we need to reduce them. To do so, we use some results from the theory of continued fractions. Specifically, for a nonhomogeneous linear form in two integer variables, we use a slight variation of a result due to Dujella and Peth{\H o} (see \cite{dujella98}, Lemma 5a), which  itself is a generalization of a result of Baker and Davenport \cite{BD69}.

For a real number $X$, we write  $||X||:= \min\{|X-n|: n\in\mathbb{Z}\}$ for the distance from $X$ to the nearest integer.
\begin{lemma}[Dujella, Peth\H o]\label{Dujjella}
Let $M$ be a positive integer, $p/q$ be a convergent of the continued fraction of the irrational number $\tau$ such that $q>6M$, and  $A,B,\mu$ be some real numbers with $A>0$ and $B>1$. Let further 
$\varepsilon: = ||\mu q||-M||\tau q||$. If $ \varepsilon > 0 $, then there is no solution to the inequality
$$
0<|u\tau-v+\mu|<AB^{-w},
$$
in positive integers $u,v$ and $w$ with
$$ 
u\le M \quad {\text{and}}\quad w\ge \dfrac{\log(Aq/\varepsilon)}{\log B}.
$$
\end{lemma}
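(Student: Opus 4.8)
The plan is to argue by contradiction. Assume there is a triple of positive integers $(u,v,w)$ satisfying $0<|u\tau-v+\mu|<AB^{-w}$ together with $u\le M$ and $w\ge \log(Aq/\varepsilon)/\log B$, and derive a contradiction with the standing hypothesis $\varepsilon>0$. The whole strategy rests on converting the linear form $u\tau-v+\mu$, whose smallness is given, into a statement about $||\mu q||$, the quantity controlled through $\varepsilon$.

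First I would multiply the inequality by the positive denominator $q$, obtaining $|uq\tau-vq+\mu q|<qAB^{-w}$. The key continued-fraction input is that for a convergent $p/q$ one has $|q\tau-p|=||q\tau||$: this is the best-approximation property, and it is precisely here that the size condition $q>6M$ is used, since $M\ge 1$ forces $q>6$, and the next denominator $q_{n+1}>q_n=q$ then gives $|q\tau-p|<1/q_{n+1}<1/2$, so that $p$ is indeed the integer nearest to $q\tau$. Substituting $q\tau=p+(q\tau-p)$ and discarding the integer $up-vq\in\mathbb{Z}$ shows that $\mu q+u(q\tau-p)$ lies within $qAB^{-w}$ of an integer, i.e. $||\mu q+u(q\tau-p)||<qAB^{-w}$.

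Next I would isolate $||\mu q||$ using the triangle inequality for the nearest-integer seminorm, $||x+y||\le ||x||+||y||$. Writing $\mu q=(\mu q+u(q\tau-p))-u(q\tau-p)$ gives $||\mu q||\le ||\mu q+u(q\tau-p)||+||u(q\tau-p)||$. Bounding the last term by $||u(q\tau-p)||\le |u(q\tau-p)|=u\,||q\tau||\le M||q\tau||$ (using $u\le M$ and the best-approximation identity once more) and rearranging yields $||\mu q||-M||q\tau||<qAB^{-w}$, that is $\varepsilon<qAB^{-w}$. Since $\varepsilon>0$, $A>0$, $q>0$ and $B>1$, taking logarithms turns this into $w<\log(Aq/\varepsilon)/\log B$, which contradicts the assumed lower bound on $w$. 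Hence no such solution exists.

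The argument is short and each individual step is elementary; the only points deserving genuine care—and thus the main obstacle—are the two invocations of the best-approximation identity $||q\tau||=|q\tau-p|$, which is exactly where the convergent hypothesis and the quantitative condition $q>6M$ enter, and the correct direction of the triangle inequality for $||\cdot||$, so that the term $u(q\tau-p)$ is peeled off without destroying the positivity of $\varepsilon$. Beyond assembling these standard facts in the right order, there is no deeper difficulty.
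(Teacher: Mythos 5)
Your argument is correct and is essentially the standard proof of this lemma from Dujella and Peth\H o \cite{dujella98}; the paper itself quotes the result without proof, so your chain (multiply by $q$, absorb the integer $up-vq$, apply the triangle inequality for $\|\cdot\|$ to peel off $u(q\tau-p)$ with $u|q\tau-p|\le M\|\tau q\|$, and conclude $\varepsilon<qAB^{-w}$, contradicting the lower bound on $w$) matches the source exactly. One minor mischaracterization, harmless to validity: the identity $\|q\tau\|=|q\tau-p|$ needs only $|q\tau-p|<1/2$, which any convergent with $q_{n+1}\ge 2$ supplies, so the full strength of $q>6M$ is not ``precisely'' consumed there --- its real role is to make $M\|\tau q\|<M/q<1/6$ small enough that the hypothesis $\varepsilon>0$ can actually be satisfied in applications.
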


The above lemma cannot be applied when $\mu=0$ (since then $\varepsilon<0$). In this case, we use the following criterion of Legendre.

\begin{lemma}[Legendre]
\label{lem:legendre}
Let $\tau$ be real number and $x,y$ integers such that
\begin{equation}
\label{eq:continuedfraction}
\left|\tau-\frac{x}{y}\right|<\frac{1}{2y^2}.
\end{equation}
Then $x/y=p_k/q_k$ is a convergent of $\tau$. Furthermore, 
\begin{equation}
\label{eq:continuedfraction1}
\left|\tau-\frac{x}{y}\right|\ge \frac{1}{(a_{k+1}+2)y^2}.
\end{equation}
\end{lemma}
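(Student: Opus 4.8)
The plan is to treat this as the classical theorem of Legendre on best rational approximations, splitting the claim into its two assertions and handling each by the standard continued-fraction calculus, all of which rests on the elementary determinant identity $p_jq_{j-1}-p_{j-1}q_j=(-1)^{j-1}$ for consecutive convergents together with the Binet-type formula $\tau=(\alpha_{j+1}p_j+p_{j-1})/(\alpha_{j+1}q_j+q_{j-1})$ for the complete quotients $\alpha_{j+1}$.

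For the first assertion I would normalize so that $\gcd(x,y)=1$ and $y>0$, and write the (finite) continued fraction expansion $x/y=[a_0;a_1,\dots,a_n]=p_n/q_n$, so that $q_n=y$ and $p_n=x$. Using the freedom $[a_0;\dots,a_n]=[a_0;\dots,a_n-1,1]$ I would fix the parity of $n$ to match the sign of $\tau-x/y$. I then define a real number $\theta$ by $\tau=(p_n\theta+p_{n-1})/(q_n\theta+q_{n-1})$ and compute, via the determinant identity,
\[
\tau-\frac{x}{y}=\frac{(-1)^{n}}{q_n(q_n\theta+q_{n-1})}.
\]
The chosen parity makes the right-hand side carry the correct sign, forcing $q_n\theta+q_{n-1}>0$. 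Feeding this into the hypothesis $|\tau-x/y|<1/(2y^2)=1/(2q_n^2)$ gives $q_n\theta+q_{n-1}>2q_n$, hence $\theta>2-q_{n-1}/q_n>1$ since $q_{n-1}<q_n$. Because the complete quotient $\theta$ exceeds $1$, the expansion $\tau=[a_0;\dots,a_n,\theta]$ is a legitimate continued fraction (and $\theta$ is irrational when $\tau$ is), so $x/y=p_n/q_n$ is a convergent of $\tau$; I then relabel $n$ as $k$.

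For the second assertion I would use the exact remainder formula for convergents: writing $\alpha_{k+1}=[a_{k+1};a_{k+2},\dots]$ for the $(k+1)$-st complete quotient, the same computation as above gives
\[
\left|\tau-\frac{p_k}{q_k}\right|=\frac{1}{q_k(\alpha_{k+1}q_k+q_{k-1})}.
\]
Since $\tau$ is irrational in all of our applications, $\alpha_{k+1}=a_{k+1}+1/\alpha_{k+2}<a_{k+1}+1$; combined with $q_{k-1}\le q_k$ this yields $\alpha_{k+1}q_k+q_{k-1}<(a_{k+1}+2)q_k$, and therefore $|\tau-p_k/q_k|>1/((a_{k+1}+2)q_k^2)$, which is the desired lower bound.

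The routine part is the algebra of convergents; the one step requiring genuine care is the sign/parity bookkeeping in the first assertion, namely choosing the representation of $x/y$ so that $(-1)^n$ agrees with the sign of $\tau-x/y$. This is exactly what guarantees $q_n\theta+q_{n-1}>0$ and hence legitimizes the estimate $\theta>1$; without it the argument stalls, since one could not conclude that $\theta$ is a bona fide complete quotient. The whole statement is standard, and for the write-up I would simply cite a classical reference such as Khinchin's monograph on continued fractions.
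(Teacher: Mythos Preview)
The paper does not prove this lemma at all: it is stated as the classical criterion of Legendre, with no argument supplied, and is used later only as a black box in the reduction step for the large-$k$ case. Your write-up is therefore strictly more than what the paper contains.

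As for correctness, your argument is the standard textbook proof and is sound. One cosmetic point: in the line ``$\theta>2-q_{n-1}/q_n>1$ since $q_{n-1}<q_n$'' the inequality $q_{n-1}<q_n$ can degenerate to equality in the edge case $n=1$, $a_1=1$ (equivalently $y=1$); the conclusion $\theta>1$ survives because $2-q_{n-1}/q_n\ge 1$ in any event, but you may want to write $q_{n-1}\le q_n$ or dispose of $y=1$ separately. Your remark that irrationality of $\tau$ is used (and holds in the application, where $\tau=\log 3/\log 2$) is exactly right, since otherwise $a_{k+1}$ need not exist. Citing Khinchin, as you suggest, would match the spirit of the paper, which treats the lemma as folklore.
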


Finally, the following lemma is also useful. It is Lemma 7 in \cite{guzmanluca}. 

\begin{lemma}[G\'uzman, Luca]
\label{gl}
If $m\geqslant 1$, $T>(4m^2)^m$  and $T>x/(\log x)^m$, then
$$
x<2^mT(\log T)^m.
$$
\end{lemma}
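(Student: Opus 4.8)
The plan is to treat the hypothesis $T > x/(\log x)^m$ in its equivalent form $x < T(\log x)^m$ and to show that, under the size condition $T > (4m^2)^m$, one necessarily has $x \le T^2$; the desired bound then drops out in a single line. Throughout I may assume $x > 1$ (equivalently $\log x > 0$), since otherwise the quantities appearing in the statement are not meaningful and the inequality is trivial.

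First I would dispose of the easy implication: \emph{if} $x \le T^2$, then $\log x \le 2\log T$, so $(\log x)^m \le (2\log T)^m = 2^m(\log T)^m$, and feeding this into $x < T(\log x)^m$ gives at once
\[
x < T(\log x)^m \le 2^m T(\log T)^m,
\]
which is exactly the conclusion. Hence everything reduces to proving $x \le T^2$.

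To prove $x \le T^2$ I would argue by contradiction: suppose $x > T^2$. Plugging this into $x < T(\log x)^m$ yields $T^2 < T(\log x)^m$, i.e. $T < (\log x)^m$, whence $\log x > T^{1/m} > 4m^2$, where the final step is precisely where the hypothesis $T > (4m^2)^m$ enters. On the other hand, taking logarithms in $x < T(\log x)^m$ gives $\log x < \log T + m\log\log x$, and from $x > T^2$ we have $\log T < \frac{1}{2}\log x$; combining the two yields $\frac{1}{2}\log x < m\log\log x$, that is, with $u := \log x$,
\[
u < 2m\log u.
\]

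The crux is then the elementary fact that this last display is impossible once $u > 4m^2$. I would establish it by monotonicity: set $g(u) := u - 2m\log u$, so that $g'(u) = 1 - 2m/u > 0$ for $u > 2m$, and since $4m^2 \ge 2m$ for $m \ge 1$, the function $g$ is increasing on $[4m^2,\infty)$. Evaluating at the left endpoint,
\[
g(4m^2) = 4m^2 - 4m\log(2m) = 4m\bigl(m - \log(2m)\bigr) \ge 0,
\]
because $m - \log(2m)$ is nonnegative for all $m \ge 1$ (it equals $1-\log 2 > 0$ at $m=1$ and has derivative $1 - 1/m \ge 0$). Therefore $g(u) > 0$ for every $u > 4m^2$, i.e. $u > 2m\log u$, contradicting the inequality displayed above. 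This contradiction forces $x \le T^2$, completing the argument. The only genuinely delicate point is the bookkeeping in this final elementary estimate — checking that the threshold $4m^2$ built into the hypothesis is exactly what is needed to beat the factor $2m$ produced by the assumption $x > T^2$ — while the two reductions surrounding it are routine.
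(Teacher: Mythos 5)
Your proof is correct, but note that there is nothing in the paper itself to compare it against: the authors state this lemma without proof, quoting it as Lemma~7 of G\'uzman S\'anchez and Luca \cite{guzmanluca}, so any complete argument here is necessarily supplementary to the paper's treatment. Your reduction is sound and self-contained: rewriting the hypothesis as $x < T(\log x)^m$, observing that the conclusion is immediate once $x \le T^2$ (since then $(\log x)^m \le 2^m(\log T)^m$), and excluding $x > T^2$ by showing it would force $u := \log x$ to satisfy simultaneously $u > T^{1/m} > 4m^2$ and $u < 2m\log u$, which your monotonicity computation correctly rules out ($g(u)=u-2m\log u$ has $g'(u)>0$ for $u>2m$ and $g(4m^2)=4m\bigl(m-\log(2m)\bigr)\ge 0$ since $m\ge\log(2m)$ for $m\ge 1$). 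The side conditions are all in order: $\log T>0$ because $T>(4m^2)^m\ge 4$; $\log\log x>0$ at the point where you take logarithms because there $\log x>4m^2\ge 4$; and the strictness survives because $u>4m^2$ strictly and $g$ is strictly increasing. For comparison, the usual derivation of such ``absolute bound'' lemmas in the literature runs through the monotonicity of $t \mapsto t/(\log t)^m$ for $t > e^m$: one assumes $x \ge 2^m T(\log T)^m$, uses $T>(4m^2)^m$ to check $\log\bigl(2^m T(\log T)^m\bigr) \le 2\log T$, and concludes $x/(\log x)^m \ge T$, contradicting the hypothesis; your threshold $x \le T^2$ is precisely that same inequality in disguise, so the two routes expend the hypothesis $T>(4m^2)^m$ at the same point. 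What your version buys is a fully explicit, self-contained verification that the constant $4m^2$ suffices --- a check the paper simply outsources to the cited reference.
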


\section{The connection with the classical Pillai problem}

Assume that $ (n,m)\neq (n_{1}, m_{1}) $ are such that
$$F_{n}^{(k)} - 3^{m} = F_{n_{1}}^{(k)}-3^{m_{1}}.$$
If $ m=m_{1} $, then $ F_{n}^{(k)} = F_{n_{1}}^{(k)} $ and since $ \min\{n,n_{1}\}\geq 2$, we get that $ n=n_{1} $. Thus, $ (n,m) = (n_{1}, m_{1}) $, contradicting our assumption. Hence, $ m\neq m_{1} $, and we may assume without loss of generality that $ m>m_{1}\geq 1 $. Since
\begin{eqnarray}
\label{fala1}
F_{n}^{(k)} - F_{n_{1}}^{(k)} &=& 3^{m}-3^{m_{1}},
\end{eqnarray}
and the right--hand side of \eqref{fala1} is positive, we get that the left--hand side of
\eqref{fala1} is also positive and so $ n>n_{1} $. Furthermore, since $ F_{1}^{(k)}=F_{2}^{(k)}=1 $,   we may assume that $ n> n_{1}\geq 2 $. 

\medskip
We analyse the possible situations.
\medskip

\noindent {\bf Case 1.}
Assume that $ 2\leq n_{1}<n\leq k+1 $. Then, by \eqref{Fibbo111}, we have
\begin{eqnarray*}
F_{n_{1}}^{(k)}=2^{n_{1}-2}\qquad {\rm and} \qquad F_{n}^{(k)}=2^{n-2}
\end{eqnarray*}
so, by substituting them in \eqref{fala1}, we get
\begin{eqnarray*}
2^{n-2}-3^{m} = 2^{n_1-2}-3^{m_{1}}.
\end{eqnarray*}
By comparing with the classical solutions in \eqref{Pillaisolns}, and by using the fact that $F_n^{(k)}$ is a power of $2$ if and only if $n\le k+1$ (see \cite{BravoLucapowersof2}), we get the solutions
\begin{eqnarray}
F_{5}^{(k)}-3^{2}&=&F_{3}^{(k)}-3^{1} ~~=~ -1, ~~~k\ge 4,\nonumber \\
F_{7}^{(k)}-3^{3}&=&F_{5}^{(k)}-3^{2} ~~=~~ 5, ~~~k\ge 6,\\
F_{10}^{(k)}-3^{5}&=&F_{6}^{(k)}-3^{1} ~~=~~ 13, ~~~k\ge 9.\nonumber
\end{eqnarray}

\medskip

\noindent {\bf Case 2.}
Assume $ n \ge k+2 $. The following lemma is useful.
\begin{lemma}\label{classical1}
For $n\ge k+2$, the conditions 
$$
F_n^{(k)}-3^m=F_{n_1}^{(k)}-3^{m_1}\quad {\text{and}}\quad 2^{n-2}-3^m=2^{n_1-2}-3^{m_1}
$$
cannot simultaneously hold.
\end{lemma}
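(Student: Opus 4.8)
The plan is to assume both displayed equations hold simultaneously and subtract them in order to eliminate the powers of $3$. Subtracting
$$2^{n-2}-3^m=2^{n_1-2}-3^{m_1}$$
from
$$F_n^{(k)}-3^m=F_{n_1}^{(k)}-3^{m_1}$$
cancels the terms $-3^m$ and $-3^{m_1}$ on both sides, leaving
$$F_n^{(k)}-2^{n-2}=F_{n_1}^{(k)}-2^{n_1-2}.$$
Thus everything reduces to showing that the ``defect'' $g_r:=2^{r-2}-F_r^{(k)}$ cannot take the same value at the two distinct indices $r=n$ and $r=n_1$. Note first that $g_r\ge 0$ for all $r\ge 2$, with $g_r=0$ precisely for $2\le r\le k+1$ and $g_r>0$ for $r\ge k+2$, by \eqref{Fibbo111} together with the inequality $F_r^{(k)}<2^{r-2}$ valid for $r\ge k+2$.

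Next I would show that $(g_r)$ is strictly increasing once we pass the threshold $k+1$. Applying the three--term recursion $F_r^{(k)}=2F_{r-1}^{(k)}-F_{r-k-1}^{(k)}$ (valid for $r\ge 3$) and writing $2^{r-2}=2\cdot 2^{r-3}$, one obtains
$$g_r=2\bigl(2^{r-3}-F_{r-1}^{(k)}\bigr)+F_{r-k-1}^{(k)}=2g_{r-1}+F_{r-k-1}^{(k)},$$
whence
$$g_r-g_{r-1}=g_{r-1}+F_{r-k-1}^{(k)}.$$
For $r\ge k+2$ the index satisfies $r-k-1\ge 1$, so $F_{r-k-1}^{(k)}\ge F_1^{(k)}=1>0$, while $g_{r-1}\ge 0$; therefore $g_r-g_{r-1}>0$. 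This proves $g_{k+1}<g_{k+2}<g_{k+3}<\cdots$, i.e. the sequence $(g_r)_{r\ge k+1}$ is strictly increasing with $g_{k+1}=0$.

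Finally I would combine these facts. We are in the situation $n>n_1\ge 2$ with $n\ge k+2$. If $n_1\le k+1$ then $g_{n_1}=0<g_n$, since $g_n>0$; if $n_1\ge k+2$ then $g_n>g_{n_1}$ by the strict monotonicity just established. In either case $g_n\ne g_{n_1}$, contradicting the identity $g_n=g_{n_1}$ derived by subtraction. This contradiction shows the two conditions cannot hold together, proving the lemma. I do not anticipate a genuine obstacle here: the argument is entirely elementary once one sees that subtracting the two relations removes the powers of $3$ and reduces the problem to the monotonicity of $g_r=2^{r-2}-F_r^{(k)}$. The only point demanding a little care is verifying that the recursion step $g_r=2g_{r-1}+F_{r-k-1}^{(k)}$ forces \emph{strict} growth for every $r\ge k+2$, including the boundary case $r=k+2$, where $g_{r-1}=g_{k+1}=0$ and the positivity comes entirely from the term $F_{r-k-1}^{(k)}=F_1^{(k)}=1$.
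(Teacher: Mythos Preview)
Your proof is correct and follows essentially the same approach as the paper: subtract the two relations to reduce to $2^{n-2}-F_n^{(k)}=2^{n_1-2}-F_{n_1}^{(k)}$, then show the defect sequence $g_r=2^{r-2}-F_r^{(k)}$ is zero on $[2,k+1]$ and strictly increasing thereafter. The only cosmetic difference is in the monotonicity step: the paper bounds $F_{n+1}^{(k)}-F_n^{(k)}=\sum_{i=n-k+1}^{n-1}F_i^{(k)}<2^{n-2}$ directly, whereas you derive the cleaner recursion $g_r=2g_{r-1}+F_{r-k-1}^{(k)}$ from the three--term recurrence, which yields strict growth immediately.
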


\begin{proof} If they do, then
$$
2^{n-2}-F_{n}^{(k)}=2^{n_1-2}-F_{n_1}^{(k)}.
$$
The sequence $\{2^{n-2}-F_{n}^{(k)}\}_{n\ge 2}$ is $0$ at $n=2,3,\ldots,k+1$ and is $1$ at $n=k+2$. We show that from here on it is increasing. That is 
$$
2^{n-1}-F_{n+1}^{(k)}>2^{n-2}-F_n^{(k)}\quad {\text{\rm holds~for}}\quad n\ge k+2.
$$
This is equivalent to
$$
2^{n-2}>F_{n+1}^{(k)}-F_n^{(k)}=F_{n-1}^{(k)}+\cdots+F_{n+1-k}^{(k)},
$$
and this last inequality holds true because in the right--hand side we have $F_i\le 2^{i-2}$ for $i=n+1-k,n+2-k,\ldots,n-1$ and then 
$$
\sum_{i=n-k+1}^{n-1} F_i\le \sum_{i=n-k+1}^{n-1} 2^{i-2}<1+2+\cdots+2^{n-3}<2^{n-2}.
$$
\end{proof}

\section{Bounding $n$ in terms of $m$ and $k$}

By the results of the previous section, we assume that $n\ge k+2$. Thus, $ 2^{n-2}-3^{m}\neq 2^{n_1-2}-3^{m_1} $. Since $ n>n_{1}\geq 2 $, we have that $ F_{n_{1}}^{(k)}  \leq F_{n-1}^{(k)}$ and therefore
$$F_{n}^{(k)} = F_{n-1}^{(k)} + \cdots + F_{n-k}^{(k)} \geq F_{n-1}^{(k)} + \cdots + F_{n-k-1}^{(k)} \geq F_{n_{1}}^{(k)} + \cdots + F_{n-k-1}^{(k)}.$$
So, from the above, \eqref{Fib12} and \eqref{fala1}, we have
\begin{eqnarray}\label{fala4}
\alpha^{n-4}&\leq& F_{n-2}^{(k)}\leq F_{n}^{(k)}-F_{n_{1}}^{(k)} = 3^{m}-3^{m_{1}} < 3^{m}, \text{  and  }\\
\alpha^{n-1}&\geq& F_{n}^{(k)} > F_{n}^{(k)}-F_{n_{1}}^{(k)} = 3^{m}-3^{m_{1}}\geq 3^{m-1},\nonumber
\end{eqnarray}
leading to
\begin{eqnarray}\label{fala2}
1+\left(\dfrac{\log 3}{\log\alpha}\right) (m - 1) < n< \left(\dfrac{\log 3}{\log \alpha}\right)m + 4.
\end{eqnarray}

We note that the above inequality \eqref{fala2} in particular implies that $ m < n< 1.6m+4 $. 
We assume for technical reasons that $ n>600 $. By \eqref{approxgap} and \eqref{fala1}, we get
\begin{eqnarray*}
\left|f_{k}(\alpha)\alpha^{n-1} - 3^{m}\right|&=&\left|(f_{k}(\alpha)\alpha^{n-1}-F_{n}^{(k)})+(F_{n_{1}}^{(k)}-3^{m_{1}})\right|\\
&=&\left|(f_{k}(\alpha)\alpha^{n-1}-F_{n}^{(k)})+(F_{n_{1}}^{(k)}-f_{k}(\alpha)\alpha^{n_{1}-1})+(f_{k}(\alpha)\alpha^{n_{1}-1}-3^{m_{1}})\right|\\
&<&\dfrac{1}{2}+\dfrac{1}{2}+\alpha^{n_{1}-1}+3^{m_{1}}\\
&<& \alpha^{n_{1}}+3^{m_{1}}\\
&<& 2\max\{\alpha^{n_{1}}, 3^{m_{1}}\}.
\end{eqnarray*}
In the above, we have also used the fact that $ |f_{k}(\alpha)| < 1 $ (see Lemma \ref{fala5}). Dividing through by $ 3^{m} $, we get
\begin{eqnarray}\label{fala3}
&&\left|f_{k}(\alpha)\alpha^{n-1}3^{-m} -1\right|< 2\max\left\{\dfrac{\alpha^{n_{1}}}{3^{m}}, 3^{m_{1}-m}\right\} < \max\{\alpha^{n_{1}-n+6}, 3^{m_{1}-m+1}\},
\end{eqnarray}
where for the right--most inequality in \eqref{fala3} we used \eqref{fala4} and the fact that $ \alpha^{2}> 2 $.

For the left-hand side of \eqref{fala3} above, we apply Theorem \ref{Matveev11} with the data
$$
t:=3, \quad \gamma_{1}:=f_{k}(\alpha), \quad \gamma_{2}: = \alpha,\quad \gamma_{3}:=3, \quad b_{1}:=1, \quad b_{2}:=n-1, \quad b_{3}:=-m .
$$
We begin by noticing that the three numbers $ \gamma_{1}, \gamma_{2}, \gamma_{3} $ are positive real numbers and belong to the field $ \mathbb{K}: = \mathbb{Q}(\alpha)$, so we can take $ D:= [\mathbb{K}:\mathbb{Q}]= k$. Put
$$\Lambda :=f_{k}(\alpha)\alpha^{n-1}3^{-m} -1. $$
To see why $ \Lambda \neq 0 $, note that otherwise, we would then have that $ f_{k}(\alpha) = 3^{m}\alpha^{-(n-1)} $ and so $ f_{k}(\alpha) $ would be an algebraic integer, which contradicts Lemma \ref{fala5} (i).

Since $ h(\gamma_{2})= (\log \alpha)/k <(\log 2)/k $ and $ h(\gamma_{3})= \log 3$, it follows that we can take $ A_{2}:= \log 2$ and $ A_{3}:= k\log 3 $. Further, in view of Lemma \ref{fala5} (ii), we have that $ h(\gamma_{1})<3\log k$, so we can take $ A_{1}:=3k\log k $. Finally, since $ \max\{1, n-1, m\} = n-1$, we take $ B:=n $.

Then, the left--hand side of \eqref{fala3} is bounded below, by Theorem \ref{Matveev11}, as
$$\log |\Lambda| >-1.4\times 30^6 \times 3^{4.5} \times k^4 (1+\log k)(1+\log n)(3\log k)(\log 2)(\log 3).$$
Comparing with \eqref{fala3}, we get
$$\min\{(n-n_{1}-6)\log\alpha , (m-m_{1}-1)\log 3\} < 6.54\times 10^{11} k^4 \log^{2}k(1+\log n),$$
which gives
$$\min\{(n-n_{1})\log\alpha , (m-m_{1})\log 3\} < 6.60\times 10^{11} k^4 \log^{2}k(1+\log n).$$
Now the argument is split into two cases.\\

\textbf{Case 1.} $\min \lbrace (n-n_1) \log \alpha , (m-m_1) \log 2 \rbrace  = (n - n_{1}) \log \alpha$.

\medskip

In this case, we rewrite \eqref{fala1} as
\begin{eqnarray*}
\left| f_{k}(\alpha)\alpha^{n-1} - f_{k}(\alpha)\alpha^{n_{1}-1} - 3^{m}\right| &=& \left|(f_{k}(\alpha)\alpha^{n-1}- F_{n}^{(k)}) + (F_{n_{1}}^{(k)} - f_{k}(\alpha)\alpha^{n_{1}-1}) - 3^{m_{1}}\right|\\
&<&\dfrac{1}{2}+\dfrac{1}{2}+3^{m_{1}} \leq 3^{m_{1}+1}.
\end{eqnarray*}
Dividing through by $ 3^{m} $ gives
\begin{eqnarray}\label{fala6}
\left| f_{k}(\alpha)(\alpha^{n-n_{1}}-1)\alpha^{n_{1}-1}3^{-m} - 1\right|&<&3^{m_{1}-m+1}.
\end{eqnarray}
Now we put
$$
\Lambda_{1} := f_{k}(\alpha)(\alpha^{n-n_{1}}-1)\alpha^{n_{1}-1}3^{-m} - 1.
$$
We apply again Theorem \ref{Matveev11} with the following data
$$
t:=3,\quad \gamma_{1} :=f_{k}(\alpha)(\alpha^{n-n_{1}}-1), \quad \gamma_{2}:=\alpha, \quad \gamma_{3}:=3,  \quad b_{1}:=1, \quad b_{2}:=n_{1}-1, \quad b_{3}:=-m.
$$
As before, we begin by noticing that the three numbers $ \gamma_{1}, \gamma_{2}, \gamma_{3} $ belong to the field $ \mathbb{K} := \mathbb{Q}(\alpha) $, so we can take $ D:= [\mathbb{K}: \mathbb{Q}] = k$. To see why $ \Lambda_{1} \neq 0$, note that otherwise, we would get the relation $ f_{k}(\alpha)(\alpha^{n-n_{1}}-1) = 3^{m}\alpha^{1-n_{1}} $. Conjugating this last equation with any automorphism $ \sigma$ of the Galois group of $ \Psi_{k}(x) $ over $ \mathbb{Q} $ such that $ \sigma(\alpha) = \alpha^{(i)} $ for some $ i\geq 2 $, and then taking absolute values, we arrive at the equality $ |f_{k}(\alpha^{(i)})((\alpha^{(i)})^{n-n_1}-1)| = |3^{m}(\alpha^{(i)})^{1-n_1}| $. But this cannot hold because, $ |f_{k}(\alpha^{(i)})||(\alpha^{(i)})^{n-n_1}-1|<2 $ since $ |f_{k}(\alpha^{(i)})|<1 $ by Lemma \ref{fala5} (i), and $ |(\alpha^{(i)})^{n-n_1}|<1 $, since $ n>n_1$, while $ |3^{m}(\alpha^{(i)})^{1-n_1}|\geq 3$.

Since
$$
h(\gamma_{1})\leq h(f_{k}(\alpha)) +h(\alpha^{n-n_{1}}-1) 
< 3\log k +(n-n_{1})\dfrac{\log\alpha}{k}+\log 2,
$$
it follows that
$$
kh(\gamma_{1}) < 6k\log k + (n - n_1)\log\alpha < 6k\log k + 6.60 \times 10^{11} k^4 \log^{2}k(1+\log n).
$$
So, we can take $ A_{1}:= 6.80\times 10^{11} k^4 \log^{2}k(1+\log n) $. Further, as before, we take $ A_{2} :=\log 2 $ and $ A_{3}: = k\log3 $. Finally, by recalling that $ m<n $, we can take $ B:=n $.

We then get that
$$\log|\Lambda_{1}|>-1.4\times 30^6 \times 3^{4.5}\times k^{3}(1+\log k)(1+\log n)(6.80\times 10^{11} k^4 \log^{2}k(1+\log n))(\log 2)(\log 3),$$
which yields
$$ \log |\Lambda_{1}|>-7.41 \times 10^{22} k^7\log^3 k(1+\log n)^2.$$
Comparing this with \eqref{fala6}, we get that
$$(m-m_{1})\log 3 < 7.50\times 10^{22} k^7\log^3 k(1+\log n)^{2}.$$

\medskip

\textbf{Case 2.} $\min \lbrace (n-n_1) \log \alpha , (m-m_1) \log 3 \rbrace  = (m - m_{1} ) \log 3$.

\medskip

In this case, we write \eqref{fala1} as
\begin{eqnarray*}
\left|f_{k}(\alpha)\alpha^{n-1} - 3^{m} +3^{m_{1}}\right| &=& \left|(f_{k}(\alpha)\alpha^{n-1} -F_{n}^{(k)}) + (F_{n_{1}}^{(k)} - f_{k}(\alpha)\alpha^{n_{1}-1})+ f_{k}(\alpha)\alpha^{n_{1}-1} \right| \\
&<&\dfrac{1}{2}+\dfrac{1}{2}+\alpha^{n_{1}-1} ~~<~~\alpha^{n_{1}},
\end{eqnarray*}
so that
\begin{eqnarray}\label{fala7}
&&\left|f_{k}(\alpha)(3^{m-m_{1}}-1)^{-1}\alpha^{n-1}3^{-m_{1}} - 1\right|<\dfrac{\alpha^{n_{1}}}{3^{m}-3^{m_{1}}}\leq \dfrac{2\alpha^{n_{1}}}{3^{m}}<\alpha^{n_{1}-n+6}.
\end{eqnarray}
The above inequality \eqref{fala7} suggests once again studying a lower bound for the absolute value of
$$
\Lambda_{2} := f_{k}(\alpha)(3^{m-m_{1}}-1)^{-1}\alpha^{n-1}3^{-m_{1}} - 1.
$$
We again apply Matveev's theorem with the following data
$$
t: =3,\quad \gamma_{1}: =f_{k}(\alpha)(3^{m-m_{1}}-1)^{-1},\quad \gamma_{2}: = \alpha, \quad \gamma_{3}: = 3, \quad b_{1}:=1,\quad b_{2}:=n-1, \quad b_{3}:=-m_{1}.
$$
We can again take $ B:=n $ and  $ \mathbb{K} := \mathbb{Q}(\alpha) $, so that $ D:=k $. We also note that, if $ \Lambda_{2} =0 $, then $ f_{k}(\alpha) = \alpha^{-(n-n_{1})} 3^{m_{1}} (3^{m-m_{1}}-1) $ implying that $ f_{k}(\alpha) $ is an algebraic integer, which is not the case. Thus, $ \Lambda_{2} \neq 0 $.

Now, we note that
$$
h(\gamma_{1})\leq  h(f_{k}(\alpha))+h(3^{m-m_{1}}-1)
<3\log k +(m-m_{1}+k)\dfrac{\log 3}{k}.
$$
Thus, $ kh(\gamma_{1})< 4k\log k + (m-m_{1})\log 3  < 6.80 \times 10^{11} k^4\log^2k(1+\log n)$, and so we can take $ A_{1} := 6.80 \times 10^{11} k^4\log^2k(1+\log n) $. As before, we take $ A_{2}: = \log 2 $ and $ A_{3} := k\log  3 $.
It then follows from Matveev's theorem, after some calculations, that
$$
\log |\Lambda_{2}| > -7.41\times 10^{22}k^7\log^3 k(1+\log n)^2.
$$
From this and \eqref{fala7}, we obtain that
$$(n-n_{1})\log\alpha < 7.50\times 10^{22}k^7\log^3 k(1+\log n)^2.$$
Thus, in both Case $ 1 $ and Case $ 2 $, we have
\begin{eqnarray}\label{fala8}
\min\{(n-n_{1})\log\alpha , (m-m_{1})\log 2\} & < & 6.6\times 10^{11} k^4 \log^{2}k(1+\log n),\\
\max\{(n-n_{1})\log\alpha , (m-m_{1})\log 2\} & < & 7.5\times 10^{22}k^7\log^3 k(1+\log n)^2.\nonumber
\end{eqnarray}
We now finally rewrite equation \eqref{fala1} as
$$
\left|f_{k}(\alpha)\alpha^{n-1} -f_{k}(\alpha)\alpha^{n_{1}-1}-3^{m}+3^{m_{1}}\right| = \left|(f_{k}(\alpha)\alpha^{n-1} - F_{n}^{(k)})+(F_{n_{1}}^{(k)} - f_{k}(\alpha)\alpha^{n_{1}-1})\right| < 1.
$$
We divide through both sides by $ 3^{m}-3^{m_{1}} $ getting
\begin{eqnarray}\label{fala9}
&& \left|\dfrac{f_{k}(\alpha)(\alpha^{n-n_{1}}-1)}{3^{m-m_{1}}-1}\alpha^{n_{1}-1}3^{-m_{1}} - 1\right|<\dfrac{1}{3^{m}-3^{m_{1}}} \leq \dfrac{2}{3^{m}} <3^{5 - 0.8n},
\end{eqnarray}
since $n < 1.6m+4$. To find a lower--bound on the left--hand side of \eqref{fala9} above, we again apply Theorem \ref{Matveev11} with the data
$$
t:=3,\quad \gamma_{1}: =\dfrac{f_{k}(\alpha)(\alpha^{n-n_{1}}-1)}{3^{m-m_{1}}-1},\quad \gamma_{2} := \alpha, \quad \gamma_{3} := 3, \quad b_{1}:=1,\quad b_{2}:=n_{1}-1, \quad b_{3}:=-m_{1}.
$$
We also take $ B:=n $ and we take  $ \mathbb{K} := \mathbb{Q}(\alpha) $ with $ D := k $. From the properties of the logarithmic height function, we have that
\begin{eqnarray*}
kh(\gamma_{1})&\leq& k\left(h(f_{k}(\alpha))+h(\alpha^{n-n_1}-1)+h(3^{m-m_{1}}-1)\right)\\
&<&3k\log k +(n-n_{1})\log\alpha +k(m-m_{1})\log3 + 2k\log2\\
&<&8.3\times 10^{22}k^8\log^3 k(1+\log n)^2,
\end{eqnarray*}
where in the above chain of inequalities we used the bounds \eqref{fala8}. So we can take $ A_{1} :=8.3\times 10^{22}k^8\log^3 k(1+\log n)^2  $, and certainly as before we take $ A_{2} := \log 2 $ and $ A_{3}: = k\log  3 $. We need to show that if we put
$$
\Lambda_{3}:=\dfrac{f_{k}(\alpha)(\alpha^{n-n_{1}}-1)}{3^{m-m_{1}}-1}\alpha^{n_{1}-1}3^{-m_{1}} - 1,
$$
then $ \Lambda_{3} \neq 0 $. To see why $ \Lambda_{3} \neq 0$, note that otherwise, we would get the relation 
$$ 
f_{k}(\alpha)(\alpha^{n-n_{1}}-1) = 3^{m_{1}}\alpha^{1-n_{1}}(3^{m-m_{1}}-1).
$$ 
Again, as for the case of $ \Lambda_{1} $, conjugating the above relation with an automorphism $ \sigma $ of the Galois group of $ \Psi_{k}(x) $ over $ \mathbb{Q} $ such that $ \sigma(\alpha) = \alpha^{(i)} $ for some $ i\geq 2 $, and then taking absolute values, we get that $ |f_{k}(\alpha^{(i)})((\alpha^{(i)})^{n-n_1}-1)| = |3^{m_1}(\alpha^{(i)})^{1-n_1}(3^{m-m_{1}}-1)| $. This cannot hold true because in the left--hand side we have $ |f_{k}(\alpha^{(i)})||(\alpha^{(i)})^{n-n_1}-1|<2 $, while in the right--hand side we have $ |3^{m_{1}}||(\alpha^{(i)})^{1-n_1}||3^{m-m_1}-1|\geq 4 $. Thus,
$ \Lambda_{3} \neq 0 $. Then Theorem \ref{Matveev11} gives
$$\log |\Lambda_{3}|>-1.4\times 30^{6}\times 3^{4.5}k^{11}(1+\log k)(1+\log n)\left(8.3\times 10^{22}\log^3 k(1+\log n)^2\right)(\log 2)(\log 3),$$
which together with \eqref{fala9} gives
$$(0.8n - 5)\log 3 < 9.05\times 10^{33} k^{11}\log^{4}k(1+\log n)^{3}.$$
The above inequality leads to
\begin{eqnarray*}
n < 6.2\times 10^{34} k^{11}\log^{4}k\log^{3}n,
\end{eqnarray*}
which can be equivalently written as
\begin{eqnarray}\label{fala10}
\dfrac{n}{(\log n)^{3}} & < & 6.2\times 10^{34} k^{11}\log^{4}k.
\end{eqnarray}
We apply Lemma \ref{gl} with the data $ m=3, ~~ x=n, ~~ T=6.2\times 10^{34} k^{11}\log^{4}k  $. Inequality \eqref{fala10} yields
\begin{eqnarray}
n & < & 8\times(6.2\times 10^{34} k^{11}\log^{4}k) \log (6.2\times 10^{34} k^{11}\log^{4}k)^{3}\nonumber\\
&<&4\times 10^{42}k^{11}(\log k)^{7}.
\end{eqnarray}
We then record what we have proved so far as a lemma.
\begin{lemma}\label{lemmaBD}
If $ (n,m,n_{1},m_{1}, k) $ is a solution in positive integers to equation \eqref{Problem}  with $ (n,m)\neq (n_{1},m_{1}) $, $ n> \min\{k+2, n_1+1\}$, $n_{1}\geq 2 $, $ m>m_{1}\geq 1 $ and $ k\geq 4 $, we then have that $ n < 4\times 10^{42}k^{11}(\log k)^{7}$.
\end{lemma}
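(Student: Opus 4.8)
The plan is to bound $n$ by a sequence of applications of Matveev's theorem (Theorem \ref{Matveev11}), in each case using the Binet-like approximation \eqref{approxgap} to replace the generalized Fibonacci terms by the dominant-root expression $f_k(\alpha)\alpha^{n-1}$ at the cost of an error bounded by $1/2$. First I would record the elementary size relation between the two exponents: combining \eqref{fala1} with the bounds \eqref{Fib12} gives $\alpha^{n-4}\le F_{n-2}^{(k)}\le 3^m$ and $\alpha^{n-1}\ge F_n^{(k)}>3^{m-1}$, which yields \eqref{fala2} and hence $m<n<1.6m+4$. Thus $n$ and $m$ are of comparable size and it suffices to bound either one.

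Next I would rearrange \eqref{fala1} as $|f_k(\alpha)\alpha^{n-1}-3^m|<2\max\{\alpha^{n_1},3^{m_1}\}$, absorbing the two Binet errors together with the smaller quantities $f_k(\alpha)\alpha^{n_1-1}$ and $3^{m_1}$, and divide by $3^m$ to obtain a linear form in the three logarithms of $f_k(\alpha)$, $\alpha$ and $3$. Before applying Matveev I must check this form is nonzero: otherwise $f_k(\alpha)$ would be an algebraic integer, contradicting Lemma \ref{fala5}(i). Matveev's lower bound, compared with the upper bound \eqref{fala3}, then yields a bound of shape $k^4\log^2k(1+\log n)$ for $\min\{(n-n_1)\log\alpha,(m-m_1)\log 3\}$. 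Since this only controls the \emph{smaller} of the two differences, a bootstrapping step is required. I would split into the two cases according to which difference is the minimum and regroup \eqref{fala1} by factoring out $\alpha^{n-n_1}-1$ (Case 1) or $3^{m-m_1}-1$ (Case 2), so that the already-bounded difference is absorbed into the height of a new leading coefficient $\gamma_1$. A second application of Matveev—with nonvanishing verified by conjugating with a Galois automorphism sending $\alpha$ to some $\alpha^{(i)}$, $i\ge 2$, and using $|f_k(\alpha^{(i)})|<1$ and $|\alpha^{(i)}|<1$—then produces the complementary bound, giving both inequalities \eqref{fala8}.

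Finally I would feed both bounds of \eqref{fala8} into the height estimate for the single symmetric linear form
$$\Lambda_3=\frac{f_k(\alpha)(\alpha^{n-n_1}-1)}{3^{m-m_1}-1}\,\alpha^{n_1-1}3^{-m_1}-1,$$
obtained from \eqref{fala1} by dividing through by $3^m-3^{m_1}$, whose absolute value is $O(3^{-0.8n})$ once $n<1.6m+4$ is used. One last application of Matveev, with $h(\gamma_1)$ controlled through \eqref{fala8}, gives an inequality of the form $0.8n\log 3\ll k^{11}\log^4k(1+\log n)^3$, which rearranges to $n/(\log n)^3<6.2\times 10^{34}k^{11}\log^4k$. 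Applying Lemma \ref{gl} with $m=3$ and $T=6.2\times 10^{34}k^{11}\log^4k$ then converts this into the stated explicit bound $n<4\times 10^{42}k^{11}(\log k)^7$.

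I expect the main obstacle to be the bookkeeping in the bootstrapping step: the height of the new leading coefficient must be bounded using the inequality for the minimum obtained one step earlier, and the Galois-conjugation argument for $\Lambda_1,\Lambda_2,\Lambda_3\ne 0$ must be handled with care, since it relies essentially on $|f_k(\alpha^{(i)})|<1$ and $|(\alpha^{(i)})^{n-n_1}|<1$ for $i\ge 2$. Propagating the numerical constants consistently through three or four rounds of Matveev, while ensuring each linear form is nonzero, is where the real effort lies; the concluding invocation of Lemma \ref{gl} is then routine.
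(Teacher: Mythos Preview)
Your proposal is correct and follows essentially the same route as the paper: the size relation \eqref{fala2}, a first Matveev bound on $\Lambda$ giving control of the minimum of $(n-n_1)\log\alpha$ and $(m-m_1)\log 3$, the two-case bootstrapping via $\Lambda_1$ and $\Lambda_2$ to obtain \eqref{fala8}, the final application to $\Lambda_3$ yielding $n/(\log n)^3<6.2\times 10^{34}k^{11}\log^4 k$, and then Lemma \ref{gl}. The only minor deviation is that for $\Lambda_2\neq 0$ the paper uses the algebraic-integer argument (as for $\Lambda$) rather than Galois conjugation, but either works.
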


\section{Reduction of the bounds on $ n $}

\subsection{The cutoff $k$}

We have from the above lemma that Baker's method gives
$$
n < 4\times 10^{42}k^{11}(\log k)^7.
$$
By imposing that the above amount is at most $ 2^{k/2} $, we get
\begin{eqnarray*}
4\times 10^{42}k^{11}(\log k)^7 &<& 2^{k/2}. 
\end{eqnarray*}
The inequality above holds for $k>600$. 


We now reduce the bounds and to do so we make use of Lemma \ref{Dujjella} several times.

\subsection{The Case of small $ k $}
We now treat the cases when $ k\in [4, 600] $. First, we consider equation \eqref{fala1} which is equivalent to \eqref{Problem}. For $ k\in[4, 600] $ and $ n\in[3, 600] $, consider the sets
\begin{eqnarray*}
F_{n,k}:=\left\{F_{n}^{(k)}-F_{n_1}^{(k)} (\text{mod} ~10^{20}):~n\in [3, 600], ~ n_1 \in [2, n-1]\right\}
\end{eqnarray*}
and
\begin{eqnarray*}
D_{n,k}:=\left\{3^{m}-3^{m_1}(\text{mod}~ 10^{20}): ~m\in [2, 600], ~m_1 \in [1, m-1]\right\}.
\end{eqnarray*}
With the help of \textit{Mathematica}, we intersected these two sets and found the only solutions listed in Theorem \ref{Main}.

 Next, we note that for these values of $ k $, Lemma \ref{lemmaBD} gives us absolute upper bounds for $ n $. However, these upper bounds are so large that we wish to reduce them to a range where the solutions can be easily identified by a computer. To do this, we return to \eqref{fala3} and put
\begin{eqnarray}
\Gamma:=(n-1)\log\alpha-m\log 3+\log(f_k(\alpha)).
\end{eqnarray}
For technical reasons we assume that $ \min\{n-n_1, m-m_1\} \ge 20 $. In the case that this condition fails, we consider one of the following inequalities instead:
\begin{itemize}
\item [(i)] if $ n-n_1<20 $ but $ m-m_1\ge 20 $, we consider \eqref{fala6};
\item[(ii)] if $ n-n_1\ge 20 $ but $ m-m_1< 20 $, we consider \eqref{fala7};
\item[(iii)] if $ n-n_1<20 $ but $ m-m_1< 20 $, we consider \eqref{fala9}.
\end{itemize}
We start by considering \eqref{fala3}. Note that $ \Gamma\neq 0 $; thus we distinguish the following two cases. If $ \Gamma>0 $, then $ e^{\Gamma}-1>0 $, then from \eqref{fala3} we get
\begin{eqnarray*}
0<\Gamma<e^{\Gamma}-1< \max\left\{\alpha^{n_1-n+6}, 3^{m_1-m+1}\right\}.
\end{eqnarray*}
Next we suppose that $ \Gamma<0 $. Since $ \Lambda = |e^{\Gamma}-1|<\frac{1}{2} $, we get that $ e^{|\Gamma|}<2 $. Therefore,
\begin{eqnarray*}
0<|\Gamma|\le e^{|\Gamma|}-1 = e^{|\Gamma|}|e^{\Gamma}-1|<2\max\left\{\alpha^{n_1-n+6}, 3^{m_1-m+1}\right\}.
\end{eqnarray*}
Therefeore, in any case, the following inequality holds
\begin{eqnarray}\label{BD111}
0<|\Gamma|<2\max\left\{\alpha^{n_1-n+6}, 3^{m_1-m+1}\right\}.
\end{eqnarray}
By replacing $ \Gamma $ in the above inequality by its formula and dividing through by $ \log 3 $, we then conclude that
\begin{eqnarray*}
0<\left|(n-1)\left(\dfrac{\log\alpha}{\log 3}\right)-m+\dfrac{\log(f_k(\alpha))}{\log 3}\right|<\max\left\{(2\alpha^{6})\cdot\alpha^{-(n-n_1)}, \dfrac{6}{\log 3}\cdot 3^{-(m-m_1)}\right\}
\end{eqnarray*}
Then, we apply Lemma \ref{Dujjella} with the following data
\begin{eqnarray*}
k\in[4, 600], ~~~~ \tau_{k}:=\dfrac{\log\alpha}{\log 3}, ~~~~ \mu_{k}:=\dfrac{\log(f_k(\alpha))}{\log 3}, ~~~~(A_{k}, B_{k}):= (2\alpha^{6}, \alpha) \text{  or  } \left(\dfrac{6}{\log 3},3\right).
\end{eqnarray*}
Next, we put $ M_{k}:=\lfloor 4\times 10^{42}k^{11}(\log k)^{7} \rfloor $, which is the absolute upper bound on $ n $ by Lemma \ref{lemmaBD}.  An intensive computer search in \textit{Mathematica} revealed that the maximum value of $ \lfloor \log (2\alpha^6q/\varepsilon)/\log\alpha \rfloor $ is $ < 600$ and the maximum value of $ \lfloor \log ((6/\log3 )q/\varepsilon)/\log 3 \rfloor $ is $ <375 $. Thus, either
\begin{eqnarray*}
n-n_1 < \dfrac{\log(2\alpha^6q/\varepsilon)}{\log\alpha}< 600, ~~~\text{  or  } m-m_1 < \dfrac{\log((6/\log 3)q/\varepsilon)}{\log 3}< 375.
\end{eqnarray*}
Therefore, we have that either $ n-n_1 \leq 600 $ or $ m-m_1 \le  375 $.

Now, let us assume that $ n-n_1\leq 600 $. In this case, we consider the inequality \eqref{fala6} and assume that $ m-m_1 \geq 20 $. Then we put
\begin{eqnarray*}
\Gamma_{1}:=(n_1-1)\log\alpha -m\log 3+\log ((f_k(\alpha)(\alpha^{n-n_1}-1)).
\end{eqnarray*}
By similar arguments as in the previous step for proving \eqref{BD111}, from \eqref{fala6} we get
\begin{eqnarray*}
0<|\Gamma_1|<\dfrac{6}{3^{m-m_1}},
\end{eqnarray*}
and replacing $ \Gamma_1 $ with its formula and dividing through by $ \log 3 $ gives
\begin{eqnarray}\label{BD112}
0<\left|(n_1-1)\left(\dfrac{\log\alpha}{\log 3}\right)-m+\dfrac{\log(f_k(\alpha)(\alpha^{n-n_1}-1))}{\log 3}\right|<\dfrac{6}{\log 3}\cdot3^{-(m-m_1)}.
\end{eqnarray}
As before, we keep the same $ \tau_k, ~~M_k,~~ (A_k, B_k):=((6/\log 3),3) $ and put
\begin{eqnarray*}
\mu_{k,l}:=\dfrac{\log(f_k(\alpha)(\alpha^{l}-1))}{\log 3}, ~~~~k\in[4, 600], ~~~~l:=n-n_1 \in[1, 600].
\end{eqnarray*}
We apply Lemma \ref{Dujjella} to the inequality \eqref{BD112} with the above data. A computer search in \textit{Mathematica} revealed that the maximum value of $ \lfloor \log(Aq/\varepsilon)/\log B \rfloor $ over the values of $ k\in [4, 600] $ and $ l\in [1, 600] $ is $ < 377 $. Hence, $ m-m_1 \le 377 $.

Next, we assume that $ m-m_1 \le 375 $. Here, we consider the inequality \eqref{fala7} and also assume that $ n-n_1 \geq 20$. We put
\begin{eqnarray*}
\Gamma_{2}:=(n-1)\log\alpha-m_1\log 3+\log\left(f_k(\alpha)/(3^{m-m_1}-1)\right).
\end{eqnarray*}
Thus, by the same arguments as before, we get
\begin{eqnarray*}
0<|\Gamma_{2}|<\dfrac{2\alpha^{6}}{\alpha^{n-n_1}}.
\end{eqnarray*}
By substituting for $ \Gamma_{2} $ with its formula and dividing through by $ \log 3 $ in the above inequality, we get
\begin{eqnarray*}
0<\left|(n-1)\left(\dfrac{\log\alpha}{\log 3}\right)-m_1+\dfrac{\log\left(f_k(\alpha)/(3^{m-m_1}-1)\right)}{\log 3}\right|<\dfrac{2\alpha^6}{\log 3}\cdot\alpha^{-(n-n_1)}.
\end{eqnarray*}
As before, we apply Lemma \ref{Dujjella} with the same $ \tau_k,~~M_k, ~~(A_k, B_k):=(2\alpha^{6}/\log 3, \alpha) $ and put
\begin{eqnarray*}
\mu_{k,j}:=\dfrac{\log\left(f_k(\alpha)/(3^{m-m_1}-1)\right)}{\log 3}, ~~~~ k\in[4, 600], ~~~j:=m-m_1\in [1, 375].
\end{eqnarray*}
A computer search with \textit{Mathematica} revealed that the maximum value of $ \lfloor \log(Aq/\varepsilon)/\log B \rfloor$, for $ k\in[4, 600] $ and $ j\in [1, 375] $ is $ < 603 $. Hence, $ n-n_1 \leq 603 $.

To conclude the above computations, first we got that either $ n-n_1\leq 600 $ or $ m-m_1\leq 375 $. If $ n-n_1 \leq 600 $, then $ m-m_1 \leq  377 $, and if $ m-m_1 \leq 375 $, then $ n-n_1 \leq 603 $. Therefore, we can conclude that we always have
\begin{eqnarray*}
n-n_1\leq 603 ~~~\text{  and  } m-m_1 \leq 377.
\end{eqnarray*}

Finally, we go to \eqref{fala9} and put
\begin{eqnarray*}
\Gamma_{3}:=(n_1-1)\log\alpha - m_1\log 3 + \log\left(\dfrac{f_k(\alpha)(\alpha^{n-n_1}-1)}{3^{m-m_1}-1}\right).
\end{eqnarray*}
Since $ n>600 $, from \eqref{fala9} we can conclude that
\begin{eqnarray*}
0<|\Gamma_3|<\dfrac{2\cdot 3^{5}}{3^{0.8n}}.
\end{eqnarray*}
Hence, by substituting for $ \Gamma_{3} $ by its formula and dividing through by $ \log 3 $, we get
\begin{eqnarray*}
0<\left|(n_1-1)\left(\dfrac{\log\alpha}{\log 3}\right) - m_1 + \dfrac{\log\left(f_k(\alpha)(\alpha^{n-n_1}-1)/(3^{m-m_1}-1)\right)}{\log 3}\right|< 1328\cdot 3^{-0.8n}.
\end{eqnarray*}
We apply Lemma \ref{Dujjella} with the same $ \tau_k, ~~M_k, ~~(A_k, B_k):=(1328, 3), ~~k\in [4, 600], $
and put
\begin{eqnarray*}
\mu_{k, l, j}:=\dfrac{\log\left(f_k(\alpha)(\alpha^l-1)/(3^{j}-1)\right)}{\log 3},  ~~l:=n-n_1\in[1, 603], ~~j:=m-m_1\in [1, 377].
\end{eqnarray*}
A computer search in \textit{Mathematica} revealed that the maximum value of $ \lfloor \log(1328q/\varepsilon)/\log 3\rfloor $, for $ k\in[4,600], ~~l\in[1, 603]  $ and $ j\in [1, 377] $ is $ < 378 $. Hence, $ n< 473 $, which contradicts the assumption that $ n> 500 $ in the previous section.

\subsection{The case of large $k$}
We now assume that $ k>600 $. Note that for these values of $ k $ we have
\begin{eqnarray*}
n<4\times 10^{42}k^{11}(\log k)^{7}.
\end{eqnarray*}
Since, $ n\ge k+2 $, we have that $ n\ge 602 $.
The following lemma is useful.
\begin{lemma}\label{Kala224}
For $1\le n<2^{k/2}$ and $k\ge 10$, we have 
$$
F_n^{(k)}=2^{n-2}\left(1+\zeta\right)\quad {\text{where}}\quad |\zeta|<\frac{5}{2^{k/2}}.
$$
\end{lemma}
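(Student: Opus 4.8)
The plan is to work with the normalized quantity $g_n := F_n^{(k)}/2^{n-2}$, so that the assertion becomes $|g_n-1|<5/2^{k/2}$, and to drive everything off the three--term recursion $F_n^{(k)}=2F_{n-1}^{(k)}-F_{n-k-1}^{(k)}$ (valid for $n\ge 3$) rather than the Binet--type formula \eqref{Binet}; the latter would force me to control $\alpha^{n-1}$ against $2^{n-1}$ and $f_k(\alpha)$ against its limit over the long range $n<2^{k/2}$, amplifying the errors. Dividing the recursion through by $2^{n-2}$ collapses it into the clean relation $g_n=g_{n-1}-2^{-k-1}g_{n-k-1}$. Since $F_n^{(k)}=2^{n-2}$ for $2\le n\le k+1$ by \eqref{Fibbo111}, we have $g_n=1$ (hence $\zeta=0$) throughout that range, so only $n\ge k+2$ needs genuine work; note also that $g_1=F_1^{(k)}/2^{-1}=2$, which is why the useful range is effectively $n\ge 2$.

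First I would telescope, starting from $g_{k+1}=1$. Summing the identity $g_j-g_{j-1}=-2^{-k-1}g_{j-k-1}$ over $j=k+2,\dots,n$ and reindexing by $i=j-k-1$ yields the compact formula $g_n-1=-2^{-k-1}\sum_{i=1}^{n-k-1}g_i$. This exhibits the deviation of $g_n$ from $1$ as a heavily damped (factor $2^{-k-1}$) partial sum of earlier normalized terms, and it already reproduces the exact value at $n=k+2$, where the sum is just $g_1=2$ and one recovers $g_{k+2}=1-2^{-k}$, consistent with $F_{k+2}^{(k)}=2^k-1$.

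Then I would run a strong induction on $n$. Assuming $|g_i-1|<5/2^{k/2}$, hence $|g_i|<1+5/2^{k/2}$, for all $2\le i<n$, and keeping the exceptional term $g_1=2$ separate, the sum is bounded by $2+(n-k-2)\bigl(1+5/2^{k/2}\bigr)$. Using $n<2^{k/2}$ to control the number $n-k-2$ of ordinary summands gives $\bigl|\sum_{i=1}^{n-k-1}g_i\bigr|<2^{k/2}+7$, whence $|g_n-1|<2^{-k-1}(2^{k/2}+7)=2^{-k/2-1}+7\cdot 2^{-k-1}$. For $k\ge 10$ one has $2^{-k/2}\le 2^{-5}$, so this is at most $\bigl(\frac12+\frac{7}{64}\bigr)2^{-k/2}<2^{-k/2}<5/2^{k/2}$, and the induction closes with room to spare.

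The one thing to get right is the bookkeeping of the induction: checking that the constant assumed is the same one recovered (it is, and comfortably), correctly counting the $n-k-2$ summands, and carrying the stray term $g_1=2$ that lies outside the inductive hypothesis. The tension between the damping factor $2^{-k-1}$ and the at most $2^{k/2}$ terms is exactly what produces the gain $2^{-k/2}$, so the constant $5$ in the statement is generous; the argument in fact delivers the sharper bound $|\zeta|<2^{-k/2}$.
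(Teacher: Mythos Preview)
Your proof is correct and takes a genuinely different route from the paper's. The paper invokes an estimate from \cite{BGL}, namely $|f_k(\alpha)\alpha^{n-1}-2^{n-2}|<2^n/2^{k/2}$, and combines it with the Dresden--Du approximation \eqref{approxgap} via the triangle inequality; this is quick but imports an external analytic bound on how close $f_k(\alpha)\alpha^{n-1}$ stays to $2^{n-2}$. You instead normalise to $g_n=F_n^{(k)}/2^{n-2}$, rewrite the three--term recursion as $g_n=g_{n-1}-2^{-k-1}g_{n-k-1}$, telescope from $g_{k+1}=1$ to get $g_n-1=-2^{-k-1}\sum_{i=1}^{n-k-1}g_i$, and close a strong induction using only $n<2^{k/2}$ to control the number of summands. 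This is more elementary and entirely self-contained (no appeal to \cite{BGL} or to the Binet-type formula), and as you observe it actually yields the sharper constant $|\zeta|<2^{-k/2}$. You also correctly flag the boundary case $n=1$: there $g_1=2$, so the bound fails; the paper's proof glosses over this (its claim ``$F_n^{(k)}=2^{n-2}$ for $n\le k+1$'' only holds from $n=2$ by \eqref{Fibbo111}), but the lemma is only ever applied with $n\ge n_1\ge 2$, so the slip is harmless.
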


\begin{proof}
When $n\le k+1$, we have $F_{n}^{(k)}=2^{n-2}$ so we can take $\zeta:=0$. So, assume $k+2\le n<2^{k/2}$. It follows from (1.8) in \cite{BGL} that
$$
|f_k(\alpha)\alpha^{n-1}-2^{n-2}|<\frac{2^n}{2^{k/2}}.
$$
By \eqref{approxgap},  we also have $\left|F_{n}^{(k)}-f_k(\alpha)\alpha^{n-1}\right|<1/2$. Thus,
\begin{eqnarray*}
|F_n^{(k)}-2^{n-2}| & \le & |f_k(\alpha)\alpha^{n-1}-2^{n-2}|+|F_n^{(k)}-f_k(\alpha)\alpha^{n-1}|\\
& < & \frac{2^n}{2^{k/2}}+\frac{1}{2}=\frac{2^n}{2^{k/2}}\left(1+\frac{1}{2^{n-k/2+1}}\right)\le 
\frac{2^n}{2^{k/2}}\left(1+\frac{1}{2^{k/2+3}}\right)\\
& < & \frac{2^n\cdot 1.25}{2^{k/2}}=\left(\frac{5}{2^{k/2}}\right) 2^{n-2}.
\end{eqnarray*}
\end{proof}
By the above lemma, we can rewrite \eqref{fala1} as
$$
2^{n-2}(1+\zeta)-2^{n_1-2}(1+\zeta_1)=3^m-3^{m_1},\qquad \max\{|\zeta|,|\zeta_1|\}<\frac{5}{2^{k/2}}.
$$
So,
\begin{eqnarray}
\label{Kalai1}
|2^{n-2}-3^m| & = & |-\zeta\cdot 2^{n-2}+2^{n_1-2}(1+\zeta_1)-3^{m_1}|\nonumber \\
& \le & 2^{n-2}\left(\frac{5}{2^{k/2}}\right)+2^{n_1-2}\left(1+\frac{5}{2^{k/2}}\right)+3^{m_1}.
\end{eqnarray}
Next, we have 
$$
2^{n-2} > F_n^{(k)}-F_{n_1}^{(k)}=3^m-3^{m_1}\ge 2\cdot 3^{m-1},\quad {\text{\rm so}} \quad 2^{n-2}/3^m>2/3.
$$
Further,
\begin{eqnarray*}
3^m>3^m-3^{m_1} & = & F_n^{(k)}-F_{n_1}^{(k)} \ge  F_{n}^{(k)}-F_{n-1}^{(k)}\\
& \ge & F_{n-2}^{(k)}>2^{n-4}\left(1-\frac{5}{2^{k/2}}\right)\\
& > & 2^{n-4}\left(\frac{27}{32}\right)\quad (k>10),
\end{eqnarray*}
so 
\begin{equation}
\frac{128}{27}>\frac{2^{n-2}}{3^m}>\frac{2}{3}.
\end{equation}
Going back to \eqref{Kalai1}, we have
$$
|3^{m}2^{-(n-2)}-1|<\frac{5}{2^{k/2}}+\frac{1.25}{2^{n-n_1}}+\frac{3^{m_1}}{(2/3) 3^m}=\frac{5}{2^{k/2}}+1.5\left(\frac{1}{2^{n-n_1}}+\frac{1}{3^{m-m_1}}\right).
$$
Thus, 
\begin{eqnarray}\label{Kalai2}
|3^{m}2^{-(n-2)}-1|<8\max\left\{\frac{1}{2^{n-n_1}},\frac{1}{3^{m-m_1}}, \frac{1}{2^{k/2}}\right\}.
\end{eqnarray}
We now apply Theorem \ref{Matveev11} on the left-hand side of \eqref{Kalai2} with the data
\begin{eqnarray*}
\Gamma := 3^{m}2^{-(n-2)}-1,\quad t:=2, \quad \gamma_{1}:=3, \quad \gamma_{2}:=2, \quad b_{1}:=m, \quad b_{2}:=-(n-2).
\end{eqnarray*}
It is clear that $ \Gamma \neq 0 $, otherwise we would get $ 3^{m}=2^{n-2} $ which is a contradiction since $ 3^{m} $ is odd  while $ 2^{n-2} $ is even. 
We consider the field $ \mathbb{K}=\mathbb{Q} $, in this case $ D=1 $. Since $ h(\gamma_1)=h(3)=\log 3 $ and $ h(\gamma_2)=h(2)=\log 2 $, we can take $ A_{1}:=\log 3 $ and $ A_{2}:=\log 2 $. We also take $ B:=n $. Then, by Theorem \ref{Matveev11}, the left-hand side of \eqref{Kalai2} is bounded below as
\begin{eqnarray}
\log|\Gamma|>-5.86\times 10^{8}(1+\log n).
\end{eqnarray}
By comparing with \eqref{Kalai2}, we get
\begin{eqnarray*}
\min\{(n-n_1-3)\log 2, ~~(m-m_1-2)\log 3, ~~(k/2-3)\log 2\}<5.86\times 10^{8}(1+\log n),
\end{eqnarray*}
which implies that
\begin{eqnarray}
\min\{(n-n_1)\log2, ~~(m-m_1)\log 3, ~~(k/2)\log 2\}<5.88\times 10^{8}(1+\log n).
\end{eqnarray}
Now the argument is split into four cases.

\medskip

\textbf{Case 5.3.1.} $ \min\{(n-n_1)\log2, ~~(m-m_1)\log 3, ~~(k/2)\log 2\}=(k/2)\log2 $.

\medskip 

\noindent In this case, we have
\begin{eqnarray*}
(k/2)\log 2<5.88\times 10^{8}(1+\log n),
\end{eqnarray*}
which implies that
\begin{eqnarray*}
k<1.70\times 10^{9}(1+\log n).
\end{eqnarray*}

\medskip

\textbf{Case 5.3.2.} $ \min\{(n-n_1)\log2, ~~(m-m_1)\log 3, ~~(k/2)\log 2\}=(n-n_1)\log2 $.

\medskip 
\noindent We rewrite \eqref{fala1} as
\begin{eqnarray*}
|3^{m}-2^{n_1-2}(2^{n-n_1}-1)|&=&|3^{m_1}+2^{n-2}\zeta-2^{n_1-2}\zeta_1|\\
&<&3^{m_1}+2^{n-2}\left(\dfrac{10}{2^{k/2}}\right),
\end{eqnarray*}
which implies that
\begin{eqnarray}\label{Kalai3}
\left|3^{m}2^{-n_1}(2^{n-n_1}-1)^{-1}-1\right|<20\max\left\{\dfrac{1}{3^{m-m_1}}, \dfrac{1}{2^{k/2}}\right\}.
\end{eqnarray}
We now apply Matveev's theorem,  Theorem \ref{Matveev11} on the left-hand side of \eqref{Kalai3} to 
$$ 
\Gamma_{1}=3^{m}2^{-(n_1-2)}(2^{n-n_1}-1)^{-1}-1,
$$
\begin{eqnarray*}
 t:=3, \quad \gamma_{1}:=3, \quad \gamma_{2}:=2, \quad \gamma_{3}:=2^{n-n_1}-1,\quad b_1:=m, \quad b_2:=-(n_1-2), \quad b_3:=-1.
\end{eqnarray*}
Note that $ \Gamma_{1} \neq 0 $. Otherwise, $ 3^{m}=2^{n-2}-2^{n_1-2} $, so $n_1=2$, and $2^{n-2}-3^m=1$, so $n\le 4$ by classical results on Catalan's equation,  which is a contradiction  
because $ n\geq k+2>602 $.  We use the same values, $ A_1:=\log 3 $, $ A_2:=\log 2 $, $ B:=n $ as in the previous step. In order to find $A_3$, note that
\begin{eqnarray*}
h(\gamma_3)=h(2^{n-n_1}-1)\leq (n-n_1+1)\log 2 < 5.90\times 10^{8}(1+\log n).
\end{eqnarray*}
So, we take $ A_3:=5.90\times 10^{8}(1+\log n) $. By Theorem \ref{Matveev11}, we have
\begin{eqnarray*}
\log|\Gamma_{1}|>-6.43\times 10^{19}(1+\log n)^{2}.
\end{eqnarray*}
By comparing with \eqref{Kalai3}, we get
\begin{eqnarray*}
\min\{(m-m_1-3)\log3, ~(k/2-5)\log 2\}<6.43\times 10^{19}(1+\log n)^{2},
\end{eqnarray*}
which implies that
\begin{eqnarray*}
\min\{(m-m_1)\log 3, (k/2)\log 2\}<6.44\times 10^{19}(1+\log n)^{2}.
\end{eqnarray*}
At this step, we have that either $$ (m-m_1)\log 3 < 6.44\times 10^{19}(1+\log n)^{2} $$ or $$ k<1.86\times 10^{20}(1+\log n)^{2}. $$

\medskip

\textbf{Case 5.3.3.} $ \min\{(n-n_1)\log2, ~~(m-m_1)\log 3, ~~(k/2)\log 2\}=(m-m_1)\log3 $.

\medskip

\noindent We rewrite \eqref{fala1} as
\begin{eqnarray*}
|(3^{m_1}(3^{m-m_{1}}-1)-2^{n-2}|&=&|2^{n-2}\zeta-2^{n_1-2}(1+\zeta_1)|\\
&<&2^{n-2}\left(\frac{5}{2^{k/2}}\right)+2^{n_1-2}\left(1+\frac{5}{2^{k/2}}\right),
\end{eqnarray*}
which implies that
\begin{eqnarray}\label{Kalai4}
\left|3^{m_1}(3^{m-m_1}-1)2^{-(n-2)}-1\right|<20\max\left\{\dfrac{1}{2^{n-n_1}}, \dfrac{1}{2^{k/2}}\right\}.
\end{eqnarray}
We again apply Matveev's theorem,  Theorem \ref{Matveev11} on the left-hand side of \eqref{Kalai3} which is
$$ 
\Gamma_{2}=3^{m_1}2^{-(n-2)}(3^{m-m_1}-1)-1,
$$
\begin{eqnarray*}
 t:=3, \quad \gamma_{1}:=3, \quad \gamma_{2}:=2, \quad \gamma_{3}:=(3^{m-m_1}-1), \quad b_1:=m_1, \quad b_2:=-(n-2), \quad b_3:=1.
\end{eqnarray*}
Note that $ \Gamma_2 \neq 0 $. Otherwise, $ 3^{m}-3^{m_1}=2^{n-2}$, which is impossible since the left--hand side is a multiple of $3$ and the right--hand side isn't.
 We use the same values, $ A_1:=\log 3 $, $ A_2:=\log 2 $, $ B:=n $ as in the previous steps. In order to determine $A_3$, note that
\begin{eqnarray*}
h(\gamma_3)=h(3^{m-m_1}-1)\leq (m-m_1+1)\log 3 < 5.90\times 10^{8}(1+\log n).
\end{eqnarray*}
So, we take $ A_3:=5.90\times 10^{8}(1+\log n) $. By Theorem \ref{Matveev11}, we have the lower bound
\begin{eqnarray*}
\log|\Gamma_{2}|>-6.43\times 10^{19}(1+\log n)^{2}.
\end{eqnarray*}
By comparing with \eqref{Kalai4}, we get
\begin{eqnarray*}
\min\{(n-n_1-5)\log3, ~(k/2-5)\log 2\}<6.43\times 10^{19}(1+\log n)^{2},
\end{eqnarray*}
which implies that
\begin{eqnarray*}
\min\{(n-n_1)\log 3, (k/2)\log 2\}<6.44\times 10^{19}(1+\log n)^{2}.
\end{eqnarray*}
As before, at this step we have that either $$ (n-n_1)\log 3 < 6.44\times 10^{19}(1+\log n)^{2} $$ or $$ k< 1.86\times 10^{20}(1+\log n)^{2}.  $$

\medskip

Therefore, in all the three cases above, we got
\begin{eqnarray}
\min\{(n-n_1)\log2, ~~(m-m_1)\log 3, ~~(k/2)\log 2\}&<&5.88\times 10^{8}(1+\log n)\nonumber\\
\max\{(n-n_1)\log2, ~~(m-m_1)\log 3, ~~(k/2)\log 2\} &<&6.44\times 10^{19}(1+\log n)^{2}.
\end{eqnarray}

\medskip

\textbf{Case 5.3.4.} $(k/2)\log 2>6.44\times 10^{19}(1+\log n)^{2}$.

\medskip

From the previous analysis, we conclude that one of $(n-n_1)\log 2$ and $(m-m_1)\log 3$ is bounded by $5.88\times 10^{8}(1+\log n)$ and the other one by 
$6.44\times 10^{19}(1+\log n)^{2}.$ We rewrite \eqref{fala1} as
\begin{eqnarray*}
\left|3^{m_1}(3^{m-m_1}-1)-2^{n_1-2}(2^{n-n_1}-1)\right|=|\zeta|\cdot 2^{n-2}+|\zeta_1|\cdot 2^{n_1-2}\le 2^{n-2} \left(\frac{10}{2^{k/2}}\right),
\end{eqnarray*}
which implies that
\begin{eqnarray}\label{Kalai5}
\left|3^{m_{1}}2^{-(n_1-2)}\left(\dfrac{3^{m-m_1}-1}{2^{n-n_1}-1}\right)-1\right|<\dfrac{20}{2^{k/2}}.
\end{eqnarray}
We apply Matveev's Theorem to 
$$ 
\Gamma_{3}=3^{m_{1}}2^{-(n_1-2)}\left(\dfrac{3^{m-m_1}-1}{2^{n-n_1}-1}\right)-1,
$$
with the data 
\begin{eqnarray*}
t=:3, \quad \gamma_{1}:=3, \quad \gamma_{2}:=2, \quad \gamma_{3}:=\left(\dfrac{3^{m-m_1}-1}{2^{n-n_1}-1}\right), \quad b_1:=m_1, \quad b_2:=-(n_1-2),\quad  b_3:=1.
\end{eqnarray*}
Note that $ \Gamma_{3}\neq 0 $, otherwise, we get $ 2^{n}-3^{m}=2^{n_1}-3^{m_1}$ which is impossible by Lemma \ref{classical1}. 

 As before we take $ B:=n $, $ A_1:=\log 3 $, $ A_2:=\log 2 $. In oder to determine an acceptable value for $ A_3 $, note that
\begin{eqnarray*}
h(\gamma_{3})&\leq& h(3^{m-m_1}-1)+h(2^{n-n_1}-1)<(m-m_1+1)\log 3+(n-n_1+1)\log 2\\
&<&2\times 6.46\times 10^{19}(1+\log n)^{2}<1.30\times 10^{20}(1+\log n)^{2}.
\end{eqnarray*}
Thus, we take $ A_3:=1.30\times 10^{20}(1+\log n)^{2} $. By Theorem \ref{Matveev11}, we have
\begin{eqnarray*}
\log|\Gamma_{3}|>-1.86\times 10^{31}(1+\log n)^{3}.
\end{eqnarray*}
By comparing with \eqref{Kalai5}, we get
\begin{eqnarray*}
(k/2-5)\log 2 < 1.86\times 10^{31}(1+\log n)^{3},
\end{eqnarray*}
which implies that
\begin{equation}
\label{eq:finalk}
k<5.42\times 10^{31}(1+\log n)^{3}.
\end{equation}
Thus, inequality \eqref{eq:finalk} holds in all four cases. Since $ n<4\times 10^{42}k^{11}(\log k)^{7} $, then 
\begin{eqnarray}
k<5.42\times 10^{31}\left(1+\log\left(4\times 10^{42}k^{11}(\log k)^{7}\right)\right)^{3},
\end{eqnarray}
which gives the absolute upper bounds $$ k<8.631\times 10^{40}~<10^{41} $$ and $$ m<n<3.44\times 10^{506}<10^{507}. $$

We record what we have proved.

\begin{lemma}
\label{lem:abs}
We have
$$
k<10^{41}\qquad {\text{and}}\qquad m<10^{507}.
$$
\end{lemma}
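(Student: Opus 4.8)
The plan is to convert the self-referential bound \eqref{eq:finalk} into an absolute numerical bound on $k$, and then recover the bound on $m$ by back-substitution. For $k\le 600$ the claimed inequalities hold trivially (the reduction carried out in the previous subsection leaves, for such $k$, only solutions with small $n$, hence small $m$), so I would restrict attention to $k>600$, where the four-case analysis culminating in Case 5.3.4 has already produced \eqref{eq:finalk}, namely $k<5.42\times 10^{31}(1+\log n)^3$.

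The first step is to eliminate $n$ from \eqref{eq:finalk} using Lemma \ref{lemmaBD}, which gives $n<4\times 10^{42}k^{11}(\log k)^7$. Substituting this bound into \eqref{eq:finalk} produces an inequality in $k$ alone,
\[
k<5.42\times 10^{31}\Bigl(1+\log\bigl(4\times 10^{42}k^{11}(\log k)^7\bigr)\Bigr)^3.
\]
Because the right-hand side depends on $k$ only through $\log k$ and $\log\log k$, whereas the left-hand side is linear in $k$, this inequality is self-limiting. I would bound the logarithm crudely by $\log(4\times 10^{42})+11\log k+7\log\log k$ and then verify directly that the inequality fails once $k$ exceeds roughly $8.631\times 10^{40}$, which yields the clean bound $k<10^{41}$.

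The final step is a back-substitution: feeding $k<10^{41}$ into Lemma \ref{lemmaBD} bounds $n$ explicitly, and since $m<n$ (as noted just after \eqref{fala2}), we obtain $m<n<3.44\times 10^{506}<10^{507}$, which is precisely the assertion of the lemma.

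For this particular lemma there is no remaining Diophantine obstacle: all the genuine difficulty was absorbed earlier into the repeated applications of Matveev's theorem that gave \eqref{eq:finalk}. The only point that needs a little care here is the numerical resolution of the transcendental inequality in $k$ above, but since the dependence of its right-hand side on $k$ is merely polylogarithmic, this reduces to a routine verification.
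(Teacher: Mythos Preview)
Your proposal is correct and follows exactly the paper's route: substitute the bound $n<4\times 10^{42}k^{11}(\log k)^7$ from Lemma~\ref{lemmaBD} into \eqref{eq:finalk}, solve the resulting polylogarithmic inequality to get $k<8.631\times 10^{40}<10^{41}$, and then back-substitute (using $m<n$) to obtain $m<n<3.44\times 10^{506}<10^{507}$. The paper presents this derivation in the text immediately preceding the lemma and simply records the conclusion, so your write-up matches it in both method and numerics.
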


\subsection{The final reduction}

The previous bounds are too large, so we need to reduce them by applying a Baker-Davenport reduction procedure. First, we go to \eqref{Kalai2} and let
\begin{eqnarray*}
z:=m\log 3 - (n-2)\log 2.
\end{eqnarray*}
Assume $m-m_1>1066$, $ n-n_1>1690 $  and $ k>600 $. 
Then, we note that \eqref{Kalai3} can be rewritten as
\begin{eqnarray*}
\left|e^{z}-1\right|<\max\{2^{n_1-n+3}, ~~3^{m_1-m+2}, ~~2^{-k/2+3}\}.
\end{eqnarray*}
If $ z>0 $, then $ e^{z}-1 >0$, so we obtain
\begin{eqnarray*}
0<z<e^{z}-1<\max\{2^{n_1-n+3}, ~~3^{m_1-m+2}, ~~2^{-k/2+3}\}.
\end{eqnarray*}
Suppose now that $ z<0 $. Since $ \Gamma=|e^{z}-1|<1/2 $, we get that $ e^{|z|}<2 $. Thus,
\begin{eqnarray*}
0<|z|\leq e^{|z|}-1=e^{|z|}|e^{z}-1|<2\max\{2^{n_1-n+3}, ~~3^{m_1-m+2}, ~~2^{-k/2+3}\}.
\end{eqnarray*}
Therefore, in any case we have that the inequality
\begin{eqnarray}
0<|z|<2\max\{2^{n_1-n+3}, ~~3^{m_1-m+2}, ~~2^{-k/2+3}\}
\end{eqnarray}
always holds. By replacing $ z $ in the above inequality by its formula and dividing through by $ m\log 2 $, we get that
\begin{eqnarray}
0<\left|\dfrac{\log 3}{\log 2}-\dfrac{n}{m}\right|<\max\left\{\dfrac{24}{2^{n-n_1}m},~~\dfrac{26}{3^{m-m_1}m}, ~~\dfrac{24}{2^{k/2}m}\right\}.
\end{eqnarray}
Then
\begin{eqnarray*}
\max\left\{\dfrac{24}{2^{n-n_1}m},~~\dfrac{26}{3^{m-m_1}m}, ~~\dfrac{24}{2^{k/2}m}\right\}<\dfrac{1}{2m^{2}},
\end{eqnarray*}
because $ m<10^{507} $.
By the Legendre criterion Lemma \ref{lem:legendre}, it follows that $ n/m $ is a convergent of $ \log3/\log2 $. So $ n/m $ is of the form $ p_l/q_l $ for some $ l=0, 1, 2, \ldots, 972 $. Then $n/m=p_l/q_l$ implies that $m=dq_l$ for some $d\ge 1$. Thus,
\begin{eqnarray*}
\dfrac{1}{(a_{l+1}+2)q_{k}q_{l+1}}<\left|\dfrac{\log 3}{\log 2}-\dfrac{p_l}{q_l}\right|<\max\left\{\dfrac{24}{2^{n-n_1}dq_l},~~\dfrac{26}{3^{m-m_1}dq_l}, ~~\dfrac{24}{2^{k/2}dq_l}\right\}.
\end{eqnarray*}
Since $ \max\{a_{l+1}: l=0, 1, 2, \ldots, 972]=3308 $, we get that
\begin{eqnarray*}
\min\{2^{n-n_1}, 3^{m-m_1}, 2^{k/2}\}\le 26\cdot3310q_{973}.
\end{eqnarray*}
With the help of \textit{Mathematica}, we have $ q_{973} \approx 1.6834\times 10^{507} $. We then conclude that one of the following inequalities holds:
\begin{eqnarray*}
n-n_1<1690, \qquad m-m_1<1066, \qquad k<3380.
\end{eqnarray*}

Suppose first that $ m-m_1>10 $ and $ k\ge 20 $, we go back to \eqref{Kalai3} and let
\begin{eqnarray}
z_1:=m\log 3-(n_1-2)\log 2-\log(2^{n-n_1}-1).
\end{eqnarray}
Then we note that \eqref{Kalai3} can be rewritten as
\begin{eqnarray*}
\left|e^{z_1}-1\right|<\max\{3^{m_1-m+3}, ~~2^{-k/2+5}\}.
\end{eqnarray*}
This implies that
\begin{eqnarray*}
0<|z_1|<2\max\{3^{m_1-m+3}, ~~2^{-k/2+5}\}.
\end{eqnarray*}
This also holds when $ m-m_1<10 $ and $k<20$. By substituting for $ z_1 $ and dividing through by $ \log 2 $, we get
\begin{eqnarray*}
0<\left|m\left(\dfrac{\log 3}{\log 2}\right)-(n_1-2)+\dfrac{\log(1/(2^{n-n_1}-1))}{\log 2}\right|<\max\{98\cdot3^{-(m-m_1)}, ~~94\cdot2^{-k/2}\}.
\end{eqnarray*}
We put 
\begin{eqnarray*}
\tau: = \dfrac{\log 3}{\log 2}, \qquad \mu: = \dfrac{\log(1/(2^{n-n_1}-1))}{\log 2},\qquad  (A,B):=(78,3) \quad \text{ or } \quad (94,2),
\end{eqnarray*}
where $n-n_1 \in [1, 1690]$. We take $M:=10^{507}$. A computer search in \textit{Mathematica} reveals that $ q=q_{977}\approx 5.708\times 10^{510}> 6M $ and the minimum positive value of $ \varepsilon:=||\mu q||-M||\tau q||>0.0186 $. Thus, Lemma \ref{Dujjella} tells us that either $ m-m_1\leq 1078 $ or $ k\leq 3418 $.

Next, we suppose that $ n-n_1 > 10 $, $ k>20 $ and  go to \eqref{Kalai4} and let
\begin{eqnarray}
z_2:=m_1\log 3-(n-2)\log 2+\log(3^{m-m_1}-1).
\end{eqnarray}
Then we also  note that \eqref{Kalai4} can be rewritten as
\begin{eqnarray*}
\left|e^{z_2}-1\right|<\max\{2^{n_1-n+5}, ~~2^{-k/2+5}\}.
\end{eqnarray*}
This gives
\begin{eqnarray*}
0<|z_2|<2\max\{2^{n_1-n+5}, ~~2^{-k/2+5}\}.
\end{eqnarray*}
This also holds for $ n-n_1< 10 $ and $ k<20 $ as well.
By substituting for $ z_2 $ and dividing through by $ \log 2 $, we get
\begin{eqnarray*}
0<\left|m_1\left(\dfrac{\log 3}{\log 2}\right)-(n-2)+\dfrac{\log(3^{m-m_1}-1)}{\log 2}\right|<\max\{94\cdot2^{-(n-n_1)}, ~~94\cdot2^{-k/2}\}.
\end{eqnarray*}
We put 
\begin{eqnarray*}
\tau: = \dfrac{\log 3}{\log 2}, \qquad\mu: = \dfrac{\log(3^{m-m_1}-1)}{\log 2},\qquad (A,B):=(94,2),
\end{eqnarray*}
where $m-m_1 \in [1, 1066]$. We keep the same $ M $ and $ q $ as in the previous step. A computer search in \textit{Mathematica} reveals that the minimum positive value of $ \varepsilon:=||\mu q||-M||\tau q||>0.0372 $. Thus, Lemma \ref{Dujjella} tells us that either $ n-n_1\leq 1708 $ or $ k\leq 3416 $.

Lastly, we assume that $ k>20 $ and go to \eqref{Kalai5} and let
\begin{eqnarray}
z_3:=m_1\log 3-(n_1-2)\log 2-\log((3^{m-m_1}-1)/(2^{n-n_1}-1)).
\end{eqnarray}
We note that \eqref{Kalai5} can be rewritten as
\begin{eqnarray*}
\left|e^{z_3}-1\right|<2^{-k/2+5}.
\end{eqnarray*}
This gives
\begin{eqnarray*}
0<|z_3|<2^{-k/2+6},
\end{eqnarray*}
which also holds when $ k<20 $. By substituting for $ z_3 $ and dividing through by $ \log 2 $, we get
\begin{eqnarray*}
0<\left|m_1\left(\dfrac{\log 3}{\log 2}\right)-(n_1-2)+\dfrac{\log((3^{m-m_1}-1)/(2^{n-n_1}-1))}{\log 2}\right|<94\cdot2^{-k/2}.
\end{eqnarray*}
We put 
\begin{eqnarray*}
\tau: = \dfrac{\log 3}{\log 2}, \qquad \mu: = \dfrac{\log((3^{m-m_1}-1)/(2^{n-n_1}-1))}{\log 2},\qquad (A,B):=(94,2),
\end{eqnarray*}
where $n-n_1 \in [1, 1708]$ and $ m-m_1 \in [1, 1074] $. We keep the same $ M $ and  $ q $ as before. A computer search in \textit{Mathematica} reveals that the minimum positive value of $ \varepsilon:=||\mu q||-M||\tau q||>0.00058 $. Thus, Lemma \ref{Dujjella} tells us that $ k\leq 3428 $. 

Therefore, in all cases we found out that $ k<3428 $ which gives that $ n<7.2741\times10^{87}<10^{88} $. These bounds are still too large. We repeat the above procedure several times by adjusting the values of $ M $ with respect to the new bounds of $ n $. We summarise the data for the iterations performed in Table \ref{tab1}
\begin{table}[H]
\caption{Computation results}\label{tab1}
\begin{center}
\begin{tabular}{|c|c|c|c|c|}
\hline
 & $M$& $n-n_1 \le$ & $m-m_1 \le$ & $k\le$\\
\hline
$ 1 $& $ ~10^{507} $& $ 1708 $& $ 1074 $& $ 3428 $\\
$ 2 $& $ 10^{88} $& $ ~319 $& $ ~197 $& $ ~662 $\\
$ 3 $& $ 10^{80} $& $ ~287 $& $ ~180 $& $ ~590 $\\
$ 4 $& $ 10^{79} $& $ ~282 $& $ ~180 $& $ ~584 $\\
$ 5 $& $ 10^{79} $& $ ~282 $& $ ~180 $& $ ~584 $\\
\hline
\end{tabular}
\end{center}
\end{table}
\noindent
From the data displayed in the above table, it is evident that  after four times of the iteration, the upper bound on $ k $ stabilizes at $ 584 $.  Hence, $ k<600 $ which contradicts our assumption that $ k>600 $. Therefore, we have no further solutions to the Diophantine equation \eqref{Problem} with $ k>600 $.

\section*{Acknowledgements}
M.~D. was supported by the Austrian Science Fund (FWF) grants: F5510-N26---Part of the special research program (SFB), ``Quasi-Monte Carlo Methods: Theory and Applications'' and W1230---``Doctoral Program Discrete Mathematics''. F.~L. was also supported by grant CPRR160325161141 from the NRF of South Africa, grant RTNUM19 from CoEMaSS, Wits, South Africa. Part of the work in this paper was done when both authors visited the Max Planck Institute for Mathematics Bonn, in March 2018 and the Institut de Math\'ematiques de Bordeaux, Universit\'e de Bordeaux, in May 2019. They thank these institutions for hospitality and fruitful working environments.

 \end{document}